\newtheorem{corollary}{Corollary}
\colorlet{shadecolor}{orange!15}
\newmdenv[
  backgroundcolor=yellow!20,
  linecolor=red,
  linewidth=2pt,
  roundcorner=10pt,
  shadow=true,
  nobreak=true
]{warningbox}
\newcommand{\highlight}[1]{%
    \colorbox{cyan!10}{$\displaystyle #1$}%
}
\definecolor{defscol}{HTML}{ecd8d7} 
\definecolor{asumscol}{HTML}{ecd8d7} 
\definecolor{rmkscol}{HTML}{313160} 
\definecolor{exmscol}{HTML}{e04b52} 
\definecolor{lemscol}{HTML}{2c3943} 
\definecolor{thmscol}{HTML}{595765} 
\definecolor{prpscol}{HTML}{9c98b1} 
\definecolor{corscol}{HTML}{dfd9fd} 
\definecolor{clmscol}{HTML}{165c58} 
\definecolor{facscol}{HTML}{28a8a1} 
\newenvironment{thmpf}{
	{\noindent{\it \textbf{Proof for Theorem.}}}
	\tcolorbox[blanker,breakable,left=5mm,parbox=false,
    before upper={\parindent15pt},
    after skip=10pt,
	borderline west={1mm}{0pt}{thmscol!40!white}]
}{
    \textcolor{thmscol!40!white}{\hbox{}\nobreak\hfill$\blacksquare$} 
    \endtcolorbox
}
\NewDocumentCommand{\thmp}{m+m+m+m}{
    \begin{mytheorem}[label=#2]{#1}{} 
        #3
    \end{mytheorem}

    \begin{thmpf}
        #4
    \end{thmpf}
}
\NewDocumentCommand{\lem}{m+m+m}{
    \begin{mylemma}[label=#2]{#1}{}
        #3
    \end{mylemma}
}
\newenvironment{lempf}{
	{\noindent{\it \textbf{Proof for Lemma}}}
	\tcolorbox[blanker,breakable,left=5mm,parbox=false,
    before upper={\parindent15pt},
    after skip=10pt,
	borderline west={1mm}{0pt}{lemscol!40!white}]
}{
    \textcolor{lemscol!40!white}{\hbox{}\nobreak\hfill$\blacksquare$} 
    \endtcolorbox
}
\NewDocumentCommand{\lemp}{m+m+m+m}{
    \begin{mylemma}[label=#2]{#1}{}
        #3
    \end{mylemma}

    \begin{lempf}
        #4
    \end{lempf}
}
\NewDocumentCommand{\cor}{+m}{
    \begin{mycorollary}{}{}
        #1
    \end{mycorollary}
}
\newenvironment{corpf}{
	{\noindent{\it \textbf{Proof for Corollary.}}}
	\tcolorbox[blanker,breakable,left=5mm,parbox=false,
    before upper={\parindent15pt},
    after skip=10pt,
	borderline west={1mm}{0pt}{corscol!40!white}]
}{
    \textcolor{corscol!40!white}{\hbox{}\nobreak\hfill$\blacksquare$} 
    \endtcolorbox
}
\newenvironment{proppf}{
	{\noindent{\it \textbf{Proof for Proposition.}}}
	\tcolorbox[blanker,breakable,left=5mm,parbox=false,
    before upper={\parindent15pt},
    after skip=10pt,
	borderline west={1mm}{0pt}{prpscol!40!white}]
}{
    \textcolor{prpscol!40!white}{\hbox{}\nobreak\hfill$\blacksquare$} 
    \endtcolorbox
}
\newenvironment{clmpf}{
	{\noindent{\it \textbf{Proof for Claim.}}}
	\tcolorbox[blanker,breakable,left=5mm,parbox=false,
    before upper={\parindent15pt},
    after skip=10pt,
	borderline west={1mm}{0pt}{clmscol!40!white}]
}{
    \textcolor{clmscol!40!white}{\hbox{}\nobreak\hfill$\blacksquare$} 
    \endtcolorbox
}
\newenvironment{myexample}{
    \tcolorbox[blanker,breakable,left=5mm,parbox=false,
    before upper={\parindent15pt},
    after skip=10pt,
	borderline west={1mm}{0pt}{clmscol!40!white}]
}{
    \textcolor{clmscol!40!white}{\hbox{}\nobreak\hfill$\blacksquare$} 
    \endtcolorbox
}
\newenvironment{myalg}{
    \tcolorbox[blanker,breakable,left=5mm,parbox=false,
    before upper={\parindent15pt},
    after skip=10pt,
	borderline west={1mm}{0pt}{clmscol!40!white}]
}{
    \textcolor{clmscol!40!white}{\hbox{}\nobreak\hfill$\blacksquare$} 
    \endtcolorbox
}
\NewDocumentCommand{\alg}{m+m}{
    \begin{myalg}
	{\noindent{\it \textbf{Algorithm : #1 }}}\\ 
        #2
    \end{myalg}
}
\newenvironment{remark}{
    \par
    \vspace{5pt}
    \begin{minipage}{\textwidth}
        {\par\noindent{\textbf{Remark.}}}
        \tcolorbox[blanker,breakable,left=5mm,
        before skip=10pt,after skip=10pt,
        borderline west={1mm}{0pt}{rmkscol!20!white}]
}{
        \endtcolorbox
    \end{minipage}
    \vspace{5pt}
}
\NewDocumentCommand{\rmkb}{+m}{
    \begin{remark}
        #1
    \end{remark}
}
\newenvironment{sketch}{
    \par
    \vspace{5pt}
    \begin{minipage}{\textwidth}
        {\par\noindent{\textbf{Proof Sketch.}}}
        \tcolorbox[blanker,breakable,left=5mm,
        before skip=10pt,after skip=10pt,
        borderline west={1mm}{0pt}{rmkscol!20!white}]
}{
        \endtcolorbox
    \end{minipage}
    \vspace{5pt}
}
\NewDocumentCommand{\pfsketch}{+m}{
    \begin{sketch}
        #1
    \end{sketch}
}
\title{What is a Sketch-and-Precondition Derivation for Low-Rank Approximation? Inverse Power Error or Inverse Power Estimation?}
\author{Ruihan Xu \thanks{Department of Mathematics, University of Chicago, email:\texttt{ruihanx@uchicago.edu}} \and Yiping Lu \thanks{Industrial Engineering \& Management Science, Northwestern University, email: \texttt{yiping.lu@northwestern.edu} }}
\date{}
\newtheorem{theorem}{Theorem}
\newtheorem{lemma}[theorem]{Lemma}
\newcommand{\yplu}[1]{{\color{orange}{ [\textbf{Yiping:}} #1]}}
\newcommand{\rh}[1]{{\color{red}{ [\textbf{Ruihan:}} #1]}}
\begin{document}

\maketitle

\begin{abstract}
Randomized sketching accelerates large-scale numerical linear algebra by reducing computational complexity. While the traditional sketch-and-solve approach reduces the problem size directly through sketching, the sketch-and-precondition method leverages sketching to construct a computation-friendly preconditioner. This preconditioner improves the convergence speed of iterative solvers applied to the original problem, maintaining accuracy in the full space. Furthermore, the convergence rate of the solver improves at least linearly with the sketch size. Despite its potential, developing a sketch-and-precondition framework for randomized algorithms in low-rank matrix approximation remains an open challenge. We introduce the \emph{Error-Powered Sketched Inverse Iteration} (\textbf{\texttt{EPSI}}) Method via running sketched Newton iteration for the Lagrange form as a sketch-and-precondition variant for randomized low-rank approximation. Our method achieves theoretical guarantees, including a convergence rate that improves at least linearly with the sketch size.
\end{abstract}

\begin{center}
\begin{tikzpicture}
    \node[
        fill=orange,
        fill opacity=0.2,
        text opacity=1,
        rounded corners,
        inner sep=10pt
    ]{
        \parbox{0.8\linewidth}{
            \centering
            \emph{"In ending this book with the subject of preconditioners, 
            we find ourselves at the philosophical center of the scientific 
            computing of the future."} 
            
            \vspace{6pt}
            
            --- \textbf{L. N. Trefethen and D. Bau III}, \texttt{Numerical Linear Algebra} \cite{trefethen2022numerical}
        }
    };
\end{tikzpicture}
\end{center}

\tableofcontents


\section{Introduction}\label{section-introduction}

Randomized Numerical Linear Algebra (RNLA) \cite{halko2011finding,martinsson2020randomized,woodruff2014sketching,mahoney2011randomized,drineas2016randnla,murray2023randomized,avron2010blendenpik} is a rapidly advancing area of matrix computations that has significantly impacted low-rank approximations, iterative methods, projections and etc. This field highlights the power of randomized algorithms as highly effective tools for constructing approximate matrix computations. These algorithms stand out for their simplicity, efficiency, and their ability to yield surprisingly accurate results. Effective randomized least-squares solvers for minimizing $\|Ax-b\|$ ($A \in \mathbb{R}^{m \times n}$, $x \in \mathbb{R}^n$, $b \in \mathbb{R}^m$) often involve constructing a preconditioner using a sketched matrix. Notable methods include \texttt{sketch-and-precondition} approaches \cite{rokhlin2008fast, avron2010blendenpik,spielman2014nearly,meng2014lsrn,lacotte2020effective}, \texttt{iterative sketching} techniques \cite{pilanci2016iterative, ozaslan2019iterative} (with their preconditioning effect formally analyzed in \cite{xu2024randomized}), and \texttt{sketch-and-projection} frameworks \cite{gower2016sketch,gower2021adaptive} (with their preconditioning effect formally analyzed in \cite{derezinski2024sharp}). These approaches often achieve convergence rates that scale at least linearly with the sketch size, and can exhibit even faster convergence when the data matrix has favorable spectral decay properties \cite{derezinski2024sharp}.

In this paper, we mainly focus on developing a \texttt{Sketch-and-Precondition} derivation of randomized low-rank matrix approximation algorithms, which play a central role in data analysis and scientific computing. One of the most significant techniques in this area is \texttt{Randomized SVD} \cite{liberty2007randomized, halko2011algorithm, witten2015randomized, martinsson2011randomized, swartworth2023optimal}, which reduces the problem size by projecting the original matrix onto a lower-dimensional subspace using random sketching methods. The accuracy of the resulting low-rank approximation can be significantly improved through several refinement steps employing subspace iteration techniques \cite{rokhlin2010randomized, musco2015randomized, gu2015subspace,feng2024algorithm}. Such an algorithm operates as a \texttt{sketch-and-solve} approach \cite{sarlos2006improved,clarkson2017low}, where the matrix is first reduced in size using a random sketch and the solution is directly computed in the lower-dimensional space. Unlike \texttt{sketch-and-precondition} methods, it does not enhance the convergence speed of the subspace iteration refinement steps, which could otherwise scale linearly with the sketch size. A natural and important research question is 
\begin{shaded}
    \emph{What is the sketch-and-precondition derivation of randomized low-rank approximation algorithms?}
\end{shaded}
Our paper presents a simple and efficient method, the \emph{Error-Powered Sketched Inverse Iteration} (\texttt{EPSI}) method for finding the top eigenvector for a symmetric matrix $A\in\mathbb{R}^{n\times n}$, which \textbf{applies sketched inverse iteration to the sketching error but not estimated eigenvector}. To establish a sketch-and-precondition framework for developing randomized low-rank approximation algorithms, we reinterpret inverse power iteration for low-rank approximation through the perspective of Newton's method for nonlinear programming \cite{garber2015fast, tapia2018inverse}.  Specifically, we aim to run Newton sketch method \cite{roosta2016sub,roosta2016sub2,bollapragada2019exact,berahas2020investigation} for minimizing the Lagrange form $
F(u,\lambda)=\frac{1}{2}u^\top A u-\frac{\lambda}{2}(u^\top u-1).$ Thus if one has an (easy to compute inverse) approximate $\hat A$ to $A$, we can perform the following iteration
$$
u_{k+1}=u_k-\underbrace{(\hat A-\lambda I)^{-1}}_{\text{approximated }(\nabla^2 F)^{-1}}\underbrace{(A-\lambda I)u_k}_{\nabla F}=(\hat A-\lambda I)^{-1}(A-\hat A)u_k
$$

The most closely related work to ours is \cite{jin2015robust,allen2016lazysvd}, which employs an approximate solver (e.g., gradient descent) to compute \highlight{\text{the shifted inverse power iteration } (A - \lambda I)^{-1}u}. The shifted inverse power method starting with an initial guess of eigenvalue $\lambda_0$ and eigenvector \( u_0 \), the method iteratively solves the linear system \((A - \lambda_k I) u_{k+1} = u_k\) for \( u_{k+1} \), where \( I \) is the identity matrix.  The method leverages the matrix \((A - \lambda I)^{-1}\), which magnifies the components of eigenvectors associated with eigenvalues close to the shift \( \lambda \), thereby accelerating convergence for those eigenvalues. Recently, \cite{tapia2018inverse} also reinterpreted shifted inverse iteration as Newton’s method.   However, as the accuracy of the estimated eigenvectors improves, the approximate solver must achieve increasingly higher precision, leading to additional computational cost. In contrast to na\"{i}vely approximate the shifted inverse power method, our paper proposes a slightly \highlight{\text{ modified iteration } (\hat{A} - \lambda I)^{-1}(A - \hat{A})u}. In our approach, \textbf{regardless of the choice of \(\hat{A}\) (the approximate solver), the true eigenvector remains the fixed point of the iteration.} A better choice of \(\hat{A}\) only enhances the preconditioning effect but does not dictate the accuracy level needed to achieve convergence. This distinction mirrors the difference between \texttt{sketch-and-solve} and \texttt{sketch-and-precondition} approaches. Our method avoids the need for increasingly precise solvers to improve accuracy, thereby reducing computational overhead.

\begin{table}[h]
\begin{center}
\begin{tabular}{c|c|c|c}
\hline
\hline
                                  & \makecell{\textbf{\small Overparameterized}\\ [-0.6em] \textbf{ \small Least Square }} & \makecell{\textbf{\small Low Rank}\\[-0.6em]\textbf{\small Approximation}} \\ \hline\hline
\textbf{\small Sketch-and-Solve}                  &      \cite{sarlos2006improved,drineas2011faster}         &     \cite{liberty2007randomized,halko2011algorithm}    &    \makecell[l]{{\scriptsize  approximate in geometry-preserving,}\\[-0.6em]{\scriptsize  lower-dimensional space.}}             \\ \hline
\makecell{\textbf{\small Randomized}\\ [-0.6em] \textbf{\small Initialization}}  &               & \cite{gu2015subspace,musco2015randomized}       & \makecell[l]{{\scriptsize Initialize an (iterative) algorithm}\\ [-0.6em]{\scriptsize at a random point to make}\\ [-0.6em]{\scriptsize a favorable trajectory}}                  \\ \hline
\textbf{\small Sketch-and-Precondition}           &       \makecell{\cite{rokhlin2008fast, avron2010blendenpik}\\[-0.6em]
\cite{meng2014lsrn,derezinski2024sharp}}        &   \underline{\textbf{\color{red}This work}}      &     \makecell[l]{{\scriptsize boosting solver convergence while} \\[-0.6em]{\scriptsize  preserving the original problem's accuracy}}           \\ \hline
\end{tabular}
\end{center}
\vspace{-0.2in}
\caption{Different Themes in Randomized Matrix Computation \cite{kireeva2024randomized}. Our work fills the gap that constructing a randomized low-rank approximation algorithm that boost the convergence via perconditioning while preserving the accuracy of the original problem.}
\vspace{-0.3in}
\end{table}

\subsection{Contribution}
\begin{itemize}
\item We propose \emph{Error-Powered Sketched Inverse Iteration} (\texttt{EPSI}), which differs from previous methods such as approximated inverse power iteration \cite{jin2015robust,allen2016lazysvd}, applies sketched
inverse iteration to the sketching error but not the  the estimated eigenvector. In this approach, the true eigenvector remains a fixed point of the Error-Powered iteration regardless of the quality of the randomized embedding. Our Error-Powered iteration ensures that the embedding's accuracy level affects convergence only as a preconditioner, rather than determining it. Our algorithm framework is quite flexible

\item We extend the idea of \texttt{EPSI} to compute the first $k$ singular vectors, preserving the property that the sketched approximate solver serves only as a preconditioner. As a result, our method achieves a convergence rate that depends solely on $\frac{\lambda_k}{\lambda_k-\lambda_{k+1}}$, rather than on $\max\{\frac{\lambda_1}{\lambda_1-\lambda_2},\cdots,\frac{\lambda_k}{\lambda_k-\lambda_{k+1}}\}$, aligns with the results in the literature \cite{li2015convergence,musco2015randomized,allen2016lazysvd}. As discussed in Section~\ref{section:ksvd}, the approach from \cite{allen2016lazysvd} cannot be directly extended to our setting. Hence, we introduce a novel \emph{orthogonalization step} that eliminates dependence on intermediate spectral gaps.

\item Unlike the Newton Sketch method \cite{pilanci2017newton,gower2019rsn}, which requires solving a sketched optimization subproblem at each iteration, we leverage the low-rank structure of the Nyström approximation and employ the Woodbury identity (see Lemma \ref{lemma:woodbury}) to enable fast computation of the inverse in each iteration.

\item Compared to existing works on precondition-type method \cite{crouzeix1994davidson}, our method provides theoretical guarantees, including a convergence rate that improves at least linearly with the sketch size, similar to the improvements achieved by sketch-and-precondition and sketch-and-project techniques for least-squares problems. As far as the authors are known, this is the first result that can be achieved for eigenvalue computation.
\end{itemize}
\subsection{Preliminary}

\paragraph{Power and Inverse Power Methods} Two foundational iterative approaches for approximating eigenvalues are the \texttt{Power Method} and its precondition version \texttt{Shifted Inverse Power Method}. Consider a symmetric matrix $A \in \mathbb{R}^{n \times n}$ with eigenvalues $\lambda_1, \lambda_2, \dots, \lambda_n$ ordered so that $|\lambda_1| > |\lambda_2| \geq \cdots \geq |\lambda_n|$, and associated eigenvectors $v_1, v_2, \dots, v_n$. The Power Method iteratively applies $A$ to a vector to isolate the dominant eigenvalue $\lambda_1$. Starting from an initial nonzero vector $x^{(0)}$, one forms \(
x^{(k+1)} = \frac{A x^{(k)}}{\| A x^{(k)} \|}, \quad k = 0, 1, 2, \dots.\) As $k \to \infty$, $x^{(k)}$ converges to $v_1$ and the rate of convergence is roughly geometric with a factor determined by the ratio $|\lambda_2/\lambda_1|$. If $\lambda_2$ is close in magnitude to $\lambda_1$, the convergence can be slow. To design a precondition algorithm, Shifted Inverse Power Methods can use a shift $\mu$ and consider $(A - \mu I)^{-1}$. Applying the Power Method to this inverse-shifted matrix yields \(x^{(k+1)} = \frac{(A - \mu I)^{-1} x^{(k)}}{\|(A - \mu I)^{-1} x^{(k)}\|}.\) If $\mu$ is chosen close to a particular eigenvalue $\lambda_j$, the iteration effectively “magnifies” the influence of $\lambda_j - \mu$ in the inverse spectrum. As a result, the Inverse Power Method can converge much faster to the eigenvector associated with $\lambda_j$ compared to the standard Power Method. By iteratively adjusting the shift \(\mu\) to approximate \(\lambda_1\), one can achieve a convergence rate governed by a factor substantially smaller than \(|\lambda_2/\lambda_1|\), thereby significantly accelerating the overall convergence. As shown in \cite{tapia2018inverse}, this process can be interpreted as a Newton iteration followed by a normalization step, providing a key motivation for the approach presented in this work.

 \paragraph{Nystr\"{o}m Approximation} The Nystr\"{o}m approximation \cite{bach2013sharp,alaoui2015fast} obtains a low-rank approximation of the original matrix simply by drawing the test matrix $\Omega$ at random. Draw
a standard (normal) test matrix \(
\Omega \in \mathbb{R}^{n \times \ell}\), where \(\ell\) is the sketch size, and compute the sketch \(Y = A \,\Omega\). The Nystr\"{o}m approximation constructs low rank approximation directly from
the test matrix \(\Omega\) and the sketch \(Y\) via
\begin{equation}
    \begin{aligned}
    \widehat{A}_{\mathrm{nys}} \;=\; A\langle \Omega \rangle 
\;=\; Y\,(\Omega^{\mathsf{T}} Y)^{\dagger}\,Y^{\mathsf{T}}.
    \end{aligned}
\end{equation}

The randomness in the construction ensures that $ \widehat{A}_{\mathrm{nys}}$ is a
good approximation to the original matrix $A$ with high probability \cite[Section 14]{martinsson2020randomized}. Recently \cite{frangella2023randomized} uses Nystr\"{o}m approximation for solving (regularized) least square problems and we borrow the idea for the eigenvlaue problems. In this paper, we use the \texttt{NysSI} \cite{tropp2023randomized} methods to construct the pre-condition in \texttt{EPSI}.

\begin{algorithm}[ht]
\caption{Randomized Nyström Approximation \cite{szlam2014implementation,tropp2017fixed,frangella2023randomized}}
\label{alg:randnys}
\textbf{Input:} Positive-semidefinite matrix $A \in S_{n}^+(\mathbb{R})$, rank $\ell$\\
\textbf{Output:} Nyström approximation in factored form 
$\widehat{A}_{\mathrm{nys}} = U\,\widehat{\Lambda}\,U^{\mathsf{T}}$

\begin{algorithmic}[1]
\State Sample $\Omega = \mathrm{randn}(n,\ell)$  and compute QR Decomposition $\Omega = \mathrm{qr}(\Omega,0)$ 
    \Comment{Thin QR decomposition for the Gaussian test matrix}
\State $Y = A\,\Omega$ 
    \Comment{$\ell$ matrix-vector multiplications with $A$}
\State Compute $Y_{\nu} = Y + \nu\,\Omega$  where $\nu = \mathrm{eps}\bigl(\|Y\|_{\mathrm{fro}}\bigr)$ 
    \Comment{Shift for stability}
\State Compute  $C = \mathrm{chol}\bigl(\Omega^{\mathsf{T}}\,Y_{\nu}\bigr)$ and $B = Y_{\nu}\,/\,C$
\State $[\,U,\Sigma,\sim\,] = \mathrm{svd}\bigl(B,0\bigr)$ 
    \Comment{Thin SVD}
\State $\widehat{\Lambda} = \max\{\,0,\ \Sigma^{2} - \nu\,I\}$ 
    \Comment{Remove shift, compute eigenvalues}
\end{algorithmic}
\end{algorithm}

\paragraph{Preconditioning and (Jacobi-)Davidson’s Method for Eigenvalue Computation}  Preconditioned iterative methods for the linear system
\(
A x - b = 0
\)
are often mathematically equivalent to standard iterative methods applied to the preconditioned system
\(
T(A x - b) = 0.
\)
For example, applying the classical Richardson iteration to the preconditioned system leads to the update
\(
x_{n+1} = x_n \;-\; \tau_n \, T\bigl(A x_n - b\bigr),
\)
where \(\tau_n\) is a suitably chosen scalar.

Turning to eigenvalue problems involving a real symmetric positive-definite matrix \(A\) \cite{knyazev1998preconditioned,knyazev2003geometric,knyazev2009gradient,argentati2017convergence}, one can compute an eigenvector by solving the homogeneous system
\(
(A - \lambda_* I)\,x = 0,
\)
or, equivalently,
\(
T\bigl(A - \lambda_* I\bigr)\,x = 0,
\)
where \(I\) is the identity matrix. The corresponding Richardson iteration step becomes
\(
x_{n+1} = x_n \;-\; \tau_n \, T\bigl(A - \lambda_* I\bigr)\,x_n.
\) Previous works~\cite{knyazev1998preconditioned,knyazev2003geometric,knyazev2009gradient,argentati2017convergence}
construct the preconditioner \(T\) by approximating \(A^{-1}\). This approach achieves an improved convergence rate of 
\(
\gamma + (1-\gamma)\,\bm{\texttt{gap}},
\)
where \(\gamma\) measures the approximation quality of \(T\) and \(\bm{\texttt{gap}}\) denotes the eigenvalue gap. Instead of preconditioning a first-order method, our work follows \cite{tapia2018inverse} by revisiting inverse power iteration as a second-order method. In this view, the algorithm naturally attains a \((1-\gamma)\,\bm{\texttt{gap}}\) convergence rate, strictly better than previous works \cite{knyazev1998preconditioned,knyazev2003geometric,knyazev2009gradient,argentati2017convergence}. The improvement is because we interpret preconditioned methods as an approximate Newton method but not pre-conditioning a first-order methods.

Notably, \cite{argentati2017convergence} assumes a preconditioner that satisfies $(1 - \eta)A \preceq \hat{A} \preceq (1 + \eta)A$, while our approach assumes $A-3\eta I\preceq \hat A\preceq A-\eta I$, which involves two main differences 
\begin{itemize}
    \item Our assumption only requires an absolute perturbation of $A$. It makes the implementation of a broader class of powerful approximation methods possible, and one of them is the Nyström approximation used in our algorithm, which, however, cannot satisfy the assumption in \cite{argentati2017convergence}.
    \item The precondition constructed in our algorithm is around $A-2\eta I$. This distinction motivates the introduction of a negative shift in our algorithm. {The negative shift guarantees that the preconditioner is invertible thus makes the algorithm more stable. }
\end{itemize}
Another way to precondition the eigenvalue computation is the (Jacobi-)Davidson’s Method $x_{n+1}=(D-\lambda I)^{-1}(A-\lambda I)x_n$. The Davidson's method is a widely used algorithm for extracting a small number of extremal eigenvalues of large, sparse, real symmetric matrices \cite{1975JCoPh..17...87D,doi:10.1137/S1064827500372973,huang2019inner}. It is particularly efficient when the matrix is nearly diagonal—equivalently, when its eigenvector matrix is close to the identity. This technique finds its primary application in theoretical chemistry, especially in ab initio quantum chemistry calculations, where the system matrices are strongly diagonally dominant. Though a lot of variants of Davidson’s method have been investigated \cite{sleijpen1995jacobi,zhou2006studies,crouzeix1994davidson}, to the best of our knowledge, no comprehensive theoretical analysis of Davidson’s method or related precondition-type iterative algorithms has been reported. In this paper, we establish the first convergence guarantee for this class of precondition-type methods, thereby enabling the incorporation of a broader class of powerful preconditioning techniques, such as the Nyström approximation, into this framework.


\paragraph{Relationship with Inexact Rayleigh Quotient-Type Methods} Rayleigh Quotient iteration serves as a fast solver in finding the extreme eigenvalue of a real symmetric matrix with a cubic convergence. The inexact Rayleigh Quotient-Type Methods solve the linear correction equation in Rayleigh Quotient iteration inexactly, either by using an iterative solver for matrix inversion, or solving an easier approximate linear equation. Though fast enough, RQI and inexact RQI have several difficulties in real practice: the iterative solver can be slow and unstable when the approximated eigenvalue is close to the real one, and it is highly sensitive to initialization, which makes it even harder to implement in solving multiple eigenpairs. As discussed in \cite{tapia2018inverse,simoncini2002inexact}, there exists an equivalence between the Rayleigh quotient iteration and the Newton-Grassman method. Furthermore, the Newton-Grassman method solves exactly the same linear correction equation as Jacobi-Davidson's Method. Compared to existing works, this paper is the first to analyze the convergence rate under the quality of preconditioner (inexact solver), and use Nystrom approximation to construct an inverse-friendly preconditioner with high quality. Our method can be easily generalized to solving multiple eigenpairs as Davidson's method, while preserve a fast convergence as inexact RQI-type method. We present the comparison between these methods in the experiment section.

\paragraph{Notation}  We write $\mathcal{S}_n(\mathbb{R})$ for the linear space of $n\times n$ real symmetric matrices, 
while $\mathcal{S}_n^+(\mathbb{R})$ denotes the convex cone of real positive semidefinite (psd) 
matrices. The symbol $A \preceq B$ denotes the Loewner order on $\mathcal{S}_n(\mathbb{R})$; that is,
$A \preceq B$ if and only if the eigenvalues of $B - A$ are all nonnegative. 
The function $\mathrm{tr}[\cdot]$ returns the trace of a square matrix. 
We write $\lambda_j(A)$ for the $j$th largest eigenvalue of $A$; we may omit $(A)$ when the context is clear.  $\|\cdot\|$ denotes vector $\ell_2$ norm for vectors and operator $\ell_2$ norm for matrices. $\|\cdot\|_F$ denotes Frobenius norm for matrices. We denote $V_{i:j}$ as matrix consists of $i$-th to $j$-th columns of $V$.

\section{Error-Powered Sketched Inverse Iteration}

In this section, we propose a new iteration named as  Error-Powered Sketched Inverse Iteration (\textbf{EPSI}). The inversion required in the Shifted Inverse Iteration \(x^{(k+1)} = \frac{(A - \mu I)^{-1} x^{(k)}}{\|(A - \mu I)^{-1} x^{(k)}\|}\) can be computationally expensive. To mitigate this, prior works such as \cite{jin2015robust,allen2016lazysvd} employ approximate solvers, leading to the iteration \(x^{(k+1)} = \frac{\widehat{(A - \mu I)^{-1}} x^{(k)}}{\|\widehat{(A - \mu I)^{-1}} x^{(k)}\|}\), where \(\widehat{(A - \mu I)^{-1}}\) is an approximation of the inverse. This approximation, while alleviating computational burden, generally yields only an approximate solution. Consequently, achieving higher accuracy necessitates the use of more precise approximation methods or improved solver techniques. Different from \cite{jin2015robust,allen2016lazysvd},  Error-Powered Sketched Inverse Iteration applies sketched
inverse iteration to the sketching error but not estimated eigenvector and leads to the following new iteration methods

\alg{Error-Powered Sketched Inverse Iteration}{
$$
u_{k+1}=\frac{\underbrace{(\hat A-\lambda(u_k) I)^{-1}}_{\text{\tiny Sketched Inverse Iteration}}\underbrace{(\hat A-A)u_k}_{\text{\tiny Sketch Error}}}{\|(\hat A-\lambda(u_k) I)^{-1}(\hat A-A)u_k\|},
$$
where $\lambda(u_k)=\frac{u_k^\top Au_k}{u_k^\top u_k}$ is the Rayleigh quotient estimation of an eigenvalue. 
}

Unlike (approximated) inverse iteration, the true eigenvector remains a fixed point of the Error-Powered Sketched Inverse Iteration, regardless of the approximation quality of $\hat A$. 
\begin{lemma} If $u^\ast$ is an unit eigenvector of the matrix $A$, \emph{i.e.} $Au^\ast=\lambda^\ast u^\ast$, then $u^\ast$ is a fixed point of Error-Powered Sketched Inverse Iteration.
\end{lemma}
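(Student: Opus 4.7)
The plan is to verify the fixed-point claim by direct substitution of $u^\ast$ into the iteration and showing that each of the two operators in the numerator cancel on $u^\ast$. The key observation is that $u^\ast$ being an eigenvector forces $(\hat{A}-A)u^\ast$ to already lie in the range of $(\hat{A}-\lambda^\ast I)$ in a very clean way, which makes the preconditioner act like an identity on this particular vector.

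Concretely, I would proceed in three short steps. First, compute the Rayleigh quotient at $u^\ast$: since $Au^\ast = \lambda^\ast u^\ast$ and $\|u^\ast\|=1$, we get $\lambda(u^\ast) = (u^\ast)^\top A u^\ast = \lambda^\ast$, so the shift in the iteration becomes exactly $\lambda^\ast$. Second, rewrite the sketch error acting on $u^\ast$:
\[
(\hat{A}-A)u^\ast \;=\; \hat{A}u^\ast - \lambda^\ast u^\ast \;=\; (\hat{A} - \lambda^\ast I)u^\ast.
\]
Third, apply the preconditioner to observe the telescoping
\[
(\hat{A} - \lambda^\ast I)^{-1}(\hat{A}-A)u^\ast \;=\; (\hat{A} - \lambda^\ast I)^{-1}(\hat{A}-\lambda^\ast I)u^\ast \;=\; u^\ast,
\]
and since $\|u^\ast\|=1$ the normalization leaves the vector unchanged, so $u_{k+1}=u^\ast$.

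The only technical caveat to note is the implicit assumption that $\hat{A}-\lambda^\ast I$ is invertible so that the iteration is well-defined at $u^\ast$; this should be stated as a genericity condition on the Nystr\"om sketch $\hat{A}$ (e.g.\ $\lambda^\ast$ not an eigenvalue of $\hat{A}$), since otherwise the fixed-point statement is vacuous rather than false. I expect no serious obstacle: the lemma is essentially the defining algebraic identity that motivates the whole \texttt{EPSI} construction, and the proof is a one-line computation — its role is expository, to emphasize the contrast with approximated inverse iteration where $u^\ast$ is only an approximate fixed point whose accuracy is limited by the quality of $\hat{A}$.
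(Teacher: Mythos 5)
Your proof is correct and is essentially identical to the paper's: both substitute $Au^\ast=\lambda^\ast u^\ast$, note that the Rayleigh quotient equals $\lambda^\ast$, and observe the telescoping $(\hat A-\lambda^\ast I)^{-1}(\hat A-\lambda^\ast I)u^\ast=u^\ast$. Your remark about the implicit invertibility of $\hat A-\lambda^\ast I$ is a small, reasonable addition that the paper leaves tacit.
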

\begin{proof}
Substituting \( Au^\ast = \lambda^\ast u^\ast \) into the iteration, we get
\[
u_{k+1} = \frac{(\hat{A} - \lambda(u^\ast) I)^{-1} (\hat{A} - A) u^\ast}{\| (\hat{A} - \lambda^\ast I)^{-1} (\hat{A} - A) u^\ast \|} = \frac{(\hat{A} - \lambda^\ast I)^{-1} (\hat{A} - \lambda^\ast I) u^\ast}{\| u^\ast \|} = u^\ast.
\]
Thus, \( u^\ast \) is a fixed point.
\end{proof}

Furthermore, in Theorem \ref{thm:Convergence}, we showed that $\hat A$ acts as a pre-conditioner: the closer $\hat A$ is to $A$, the faster the method converges. In Section \ref{section:EPSIasNewton}, we reinterpret the Error-Powered Sketched Inverse Iteration as a Newton Sketch method \cite{pilanci2017newton}, i.e., an approximate Newton method that uses an inexact Hessian but an exact gradient. This perspective reveals a linear-quadratic convergence behavior.

\subsection{Convergence Analysis of Error-Powered Sketched Inverse Iteration}

In this section, we present the  Convergence Analysis of Error-Powered Sketched Inverse Iteration

\pfsketch{We first decompose the error $u_{k+1}-u_{\ast}$ into two parts: one reflecting the natural contraction of the EPSI iteration when using the exact eigenvalue, and another capturing the additional inaccuracy introduced by using an approximate eigenvalue as follows:

\begin{align*}
    u_{k+1}-u_* = \underbrace{\frac{(\hat{A}-\lambda_* I)^{-1}(\hat{A}-A)u_k}{\|(\hat{A}-\lambda_R I)^{-1}(\hat{A}-A)u_k\|}-u_*}_{\text{iteration using the true eigenvalue}}+\underbrace{\frac{((\hat{A}-\lambda_R I)^{-1}-(\hat{A}-\lambda_* I)^{-1})(\hat{A}-A)u_k}{\|(\hat{A}-\lambda_R I)^{-1}(\hat{A}-A)u_k\|}}_{\text{error cause by the approximated eigenvalue}}
\end{align*}

In Lemma \ref{power}, we show that applying power method to the preconditioned matrix $\frac{(\hat{A}-\lambda_* I)^{-1}(\hat{A}-A)u_k}{\|(\hat{A}-\lambda_* I)^{-1}(\hat{A}-A)u_k\|}$  is at a linear speed proportion to the quality of the sketched matrix.

Next we'll show that the additional inaccuracy introduced by using an approximate eigenvalue only contribute to the qudractic convergence part. This is because the approximated eigenvalue via the Rayleigh quotient satisfies $\lambda_R - \lambda_* = O(\|u_k-u_*\|^2)$. Thus we can prove that the error  $\|(\hat{A}-\lambda_R)^{-1}-(\hat{A}-\lambda_*)^{-1}\| = O(\|u_k-u_*\|^2)$ only  contribute to the qudractic convergence part. }

Building on the existing literature \cite{knyazev1998preconditioned, knyazev2003geometric, knyazev2009gradient, argentati2017convergence} that applies preconditioning to eigenvalue problems, we have consistently assumed that the preconditioner $\hat{A}^{-1}$ is a symmetric matrix satisfying
\(A-3\eta I \preceq \hat{A} \preceq A-\eta I,\) where $\eta$ measures the quality of the symmetric matrix. Previous works report a convergence rate of $1 - \eta + \eta \cdot \texttt{gap}$. In contrast, Lemma \ref{power} demonstrates that our \texttt{EPSI} method achieves a convergence rate of $\eta \cdot \texttt{gap}$, which is strictly better than the rates established in the existing preconditioning approaches for eigenvalue problems.

Argentati et al.~\cite{argentati2017convergence} introduce a preconditioner \(\hat{A}\) that satisfies the bound \((1 - \eta)A \preceq \hat{A} \preceq (1 + \eta)A\). In contrast, our approach constructs the preconditioner around \(A-2\eta I\), introducing a key distinction that influences the stability and effectiveness of our method. Specifically, we incorporate a negative shift in our preconditioning strategy. The negative shift is guided by the quality of the sketching approximation, and can be achieved based on different preconditioner:
\begin{itemize}
    \item With a preconditioner $\hat A=U_{nys}\hat\Lambda U_{nys}^\top$ produced by Nyström approximation, we impose a negative shift $c$ on $\hat \Lambda$ based on the knowledge of eigendecay (in most case $c\approx \frac{\hat \lambda_k^2}{\hat\lambda_1}$.
    \item With a preconditioner $\hat{A^\top A}=A^\top S^\top SA$ produced by Gaussian sketch when we want to conduct SVD on asymmetric $A\in \mathbb{R}^{m\times n}$, we draw the item in sketching matrix $S$ from $\mathcal{N}(0,c/s)$ with $c\approx 1-\sqrt{\frac{n}{s}}$ since the distortion $\eta=O(\sqrt{\frac{n}{s}})$.
\end{itemize}

This negative shift serves a crucial purpose: it ensures that the preconditioner remains invertible, thereby enhancing the numerical stability of the algorithm. The choice of shift is guided by the quality of the sketching approximation, which we quantify using the expected error of randomized SVD. By aligning the shift with the randomized SVD error, our method maintains robustness while optimizing the trade-off between approximation accuracy and computational efficiency.

\lemp{\texttt{EPSI} with Ground Truth Eigenvalue}{power}{Suppose one achieved an estimate $\hat{A}$ of semi-positive definite matrix $A$ with $A-3\eta I\preceq \hat A\preceq A-\eta I$, and furthermore shares a same column space with $A$. Let $A=V\Lambda V^\top=\lambda_1 v_1 v_1^\top+V_2\Lambda_2 V_2^\top$ be its eigen-decomposition, where $\lambda_1$ is its max eigenvalue and $\Lambda_2 =diag(\lambda_2,\lambda_3,\cdots,\lambda_n)$ satisfying $\lambda_2>\lambda_3>\cdots>\lambda_n$, then the power iteration $u_{k+1} = (\hat{A}-\lambda_1)^{-1}(\hat{A}-A)u_k$ will converge to its first eigenvector $v_1$. If $\frac{\|V_2^\top u_k\|}{\|u_k\|}\leq \epsilon$, then the iteration satisfies
    \begin{align*}
        \frac{\|V_2^\top u_{k+1}\|}{\|u_{k+1}\|}\leq \frac{3\lambda_1}{\lambda_1-\lambda_2}\frac{\eta}{\lambda_1}\frac{1}{1-3\epsilon}\frac{\|V_2^\top u_{k}\|}{\|u_{k}\|}.
    \end{align*}
}{Since $\hat A$ shares a same column space with $A$, which can be expressed as $\hat A = VTV^\top$, $A-3\eta I\preceq \hat A\preceq A-\eta I$ is equivalent to $\Lambda -3\eta I\preceq T\preceq \Lambda - \eta I$. Then the iteration of $u_k$ is
\begin{align*}
    \notag u_{k+1} &= (\lambda_1 I-\hat A)^{-1}(A-\hat A)u_k\\\notag
    &=V(\lambda_1 I-T)^{-1}(\Lambda-T)V^\top u_k,\\
\end{align*}
which indicates that
\begin{align}
\label{epsi1}
    \notag V^\top u_{k+1}&=(\lambda_1 I-T)^{-1}(\Lambda-T)V^\top u_k\\\notag
    (\lambda_1I-\Lambda)V^\top u_{k+1}&=(\lambda_1I-\Lambda)(\lambda_1 I-T)^{-1}(\Lambda-T)V^\top u_k\\
    &=\bigl(-(\lambda_1I-\Lambda)(\lambda_1 I-T)^{-1}(\lambda_1 I-\Lambda)+(\lambda_1 I-\Lambda)\bigr)V^\top u_k.
\end{align}
Note that $0\preceq\lambda_1I-\Lambda+\eta I\preceq \lambda_1I-T\preceq\lambda_1I-\Lambda+3\eta I$, thus we have
\begin{align*}
    D_-\preceq(\lambda_1I-\Lambda)(\lambda_1 I-T)^{-1}(\lambda_1I-\Lambda)\preceq D_+,
\end{align*}
where  
\begin{align*}
    & D_-=(\lambda_1I-\Lambda)(\lambda_1I-\Lambda+3\eta I)^{-1}(\lambda_1I-\Lambda)\\
    &D_+=(\lambda_1I-\Lambda)(\lambda_1I-\Lambda+\eta I)^{-1}(\lambda_1I-\Lambda)
\end{align*}
are both positive diagonal matrix. Then we can bound \eqref{epsi1} by 
\begin{align*}
    &\|\bigl(-(\lambda_1I-\Lambda)(\lambda_1 I-T)^{-1}(\lambda_1 I-\Lambda)+(\lambda_1 I-\Lambda)\bigr)V^\top u_k\|\\
    \leq& \|-D_-+\lambda_1I-\Lambda\|\|V^\top u_k^2\|\leq 3\eta\|V^\top u_k^2\|,
\end{align*}
where the last inequality comes from 
\begin{align*}
    \|-D_-+\lambda_1I-\Lambda\|&=\max_{ \lambda_i<\lambda_1} \lambda_1-\lambda_i-\frac{(\lambda_1-\lambda_i)^2}{\lambda_1-\lambda_i+3\eta}\\
    &= \max_{0\leq x}x-\frac{x^2}{x+3\eta}\\
    &=\max_{0\leq x} \frac{3\eta x}{x+3\eta}<3\eta.
\end{align*}

Note that $(\lambda_1I-\Lambda)V^\top=(\lambda_1I-\Lambda_2)V^\top_2$, thus we have
\begin{align*}
    \|V_2^\top u_{k+1}\|\leq \frac{3\eta}{\lambda_1-\lambda_2}\|V_2^\top u_k\|.
\end{align*}

Now we want to compare the normalized error $\frac{\|V_2^\top u_{k+1}\|}{\|u_{k+1}\|}$ and $\frac{\|V_2^\top u_{k}\|}{\|u_{k}\|}$. Before that, we need to bound $\|u_{k+1}\|$ in terms of $\|u_k\|$ first. Note that $u_{k+1} = u_k + (\lambda_1I-\hat A)^{-1}(A-\lambda_1I)V_2V_2^\top u_k$, where $\|(\lambda_1I-\hat A)^{-1}(A-\lambda_1I)V_2\|$ can be further bounded by
\begin{align*}
    \|(\lambda_1I-\hat A)^{-1}(A-\lambda_1I)V_2\|&=\|(\lambda_1I-T)^{-1}(\Lambda-\lambda_1I)V^\top V_2\|\\
    &\leq \frac{\lambda_1}{\eta}.
\end{align*}
Thus $\|u_{k+1}\|\geq\|u_k\|-\frac{\lambda_1}{\eta}\|V_2^\top u_k\|$. Finally with assumption $\frac{\|V_2^\top u_{k}\|}{\|u_{k}\|}\leq \epsilon$ we have
\begin{align*}
    \frac{\|V_2^\top u_{k+1}\|}{\|u_{k+1}\|}&\leq \frac{\frac{3\eta}{\lambda_1-\lambda_2}\|V_2^\top u_k\|}{\|u_{k}\|-\frac{\lambda_1}{\eta}\|V_2^\top u_k\|}\leq \frac{3\eta}{\lambda_1-\lambda_2}\frac{1}{1-\frac{\lambda_1}{\eta}\epsilon}\frac{\|V_2^\top u_{k}\|}{\|u_{k}\|}.
\end{align*}}

\thmp{Convergence Rate of EPSI}{thm:Convergence}{
    For estimate $\hat{A}$ of semi positive definite matrix $A$ with $A-3\eta I\preceq \hat A\preceq A-\eta I$, and furthermore shares a same column space with $A$. Let $A=V\Lambda V^\top=\lambda_\ast v_\ast v_\ast^\top+V_2\Lambda_2 V_2^\top$ be its eigen-decomposition, where $\lambda_\ast=\lambda_1$ is its max eigenvalue and $\Lambda_2 =diag(\lambda_2,\lambda_3,\cdots,\lambda_n)$ satisfying $\lambda_2>\lambda_3>\cdots>\lambda_n$. Then EPSI yields a (normalized) series $\{u_k\}$ which converges to $u_*$ in a linear-quadratic behavior. Suppose that $u_k$ satisfies $\frac{\|V_2^\top u_k\|}{\|u_k\|}\leq \epsilon$ with $\frac{\lambda_1}{\eta}\epsilon <1 $, then the convergence of $u_{k+1}$ is guaranteed by 
    \begin{align*}
        \|V_2^\top u_{k+1}\|\leq  \frac{3\lambda_1}{\lambda_1-\lambda_2}\frac{\eta}{\lambda_1}\frac{1}{1-\frac{\lambda_1}{\eta}\epsilon}\|V_2^\top u_k\|+\frac{2\lambda_1}{\eta}\|V_2^\top u_k\|^2
    \end{align*} 
}{As stated in the proof sketch, we first decompose $u_{k+1}-u_* $ into two parts 

\begin{align*}
    V_2^\top u_{k+1} &= \frac{V_2^\top (\hat{A}-\lambda_R I)^{-1}(\hat{A}-A)u_k}{\|(\hat{A}-\lambda_R I)^{-1}(\hat{A}-A)u_k\|}\\
    &= \underbrace{\frac{V_2^\top (\hat{A}-\lambda_* I)^{-1}(\hat{A}-A)u_k}{\|(\hat{A}-\lambda_R I)^{-1}(\hat{A}-A)u_k\|}}_{\text{iteration using the true eigenvalue}}+\underbrace{\frac{V_2^\top ((\hat{A}-\lambda_R I)^{-1}-(\hat{A}-\lambda_* I)^{-1})(\hat{A}-A)u_k}{\|(\hat{A}-\lambda_R I)^{-1}(\hat{A}-A)u_k\|}}_{\text{error cause by the approximated eigenvalue}}
\end{align*}

By lemma \ref{power} we have
\begin{align*}
     \frac{\|V_2^\top (\hat{A}-\lambda_* I)^{-1}(\hat{A}-A)u_k\|}{\|(\hat{A}-\lambda_* I)^{-1} (\hat{A}-A)u_k\|}\leq \frac{3\eta}{\lambda_1-\lambda_2}\frac{1}{1-\frac{\lambda_1}{\eta}\epsilon}\|V_2^\top u_k\|\\
\end{align*}

On the other hand, for the error made by the estimated eigenvalue $V_2^\top \bigl((\hat{A}-\lambda_R I)^{-1}-(\hat{A}-\lambda_* I)^{-1}\bigr)(\hat{A}-A)u_k$, we have
\begin{align*}
    &\|V_2^\top ((\hat{A}-\lambda_R I)^{-1}-(\hat{A}-\lambda_* I)^{-1})(\hat{A}-A)u_k\|\\
    \leq &\| (I-(\hat{A}-\lambda_* I)^{-1}(\hat{A}-\lambda_R I))(\hat{A}-\lambda_R I)^{-1}(\hat A-A)u_k\|\\
    = & \|(\hat A-\lambda_\ast I)^{-1}(\lambda_R-\lambda_\ast)(\hat A-\lambda_R I)^{-1}(\hat A-A)u_k\|\\
    \leq & \frac{1}{\eta}(\lambda_\ast-\lambda_R)\|(\hat A-\lambda_R I)^{-1}(\hat A-A)u_k\|\\
\end{align*}
Since$|\lambda_*-\lambda_R(u_k)|\leq \lambda_*-\lambda_*\|v_*^\top u_k\|^2=\lambda_*(1-(1-\|V_2^\top u_k\|^2))=\lambda_\ast\|V_2^\top u_k\|^2$. Thus 
\begin{align*}
    \| ((\hat{A}-\lambda_R I)^{-1}-(\hat{A}-\lambda_* I)^{-1})(\hat{A}-A)u_k\|\leq \frac{\lambda_\ast|V_2^\top u_k\|^2}{\eta}\|(\hat A-\lambda_R I)^{-1}(\hat A-A)u_k\|
\end{align*}

Combining the linear term and quadratic term one gets the convergence rate of EPSI
\begin{align*}
    \|V_2^\top u_{k+1}\|&=\|\frac{V_2^\top (\hat{A}-\lambda_* I)^{-1}(\hat{A}-A)u_k}{\|(\hat{A}-\lambda_R I)^{-1}(\hat{A}-A)u_k\|}+\frac{V_2^\top ((\hat{A}-\lambda_R I)^{-1}-(\hat{A}-\lambda_* I)^{-1})(\hat{A}-A)u_k}{\|(\hat{A}-\lambda_R I)^{-1}(\hat{A}-A)u_k\|}\|\\
    &\leq \frac{3\eta}{\lambda_1-\lambda_2}\frac{1}{1-3\epsilon}\|V_2^\top u_k\|+2\|\frac{ ((\hat{A}-\lambda_R I)^{-1}-(\hat{A}-\lambda_* I)^{-1})(\hat{A}-A)u_k}{\|(\hat{A}-\lambda_R I)^{-1}(\hat{A}-A)u_k\|}\|\\
    &\leq \frac{3\eta}{\lambda_1-\lambda_2}\frac{1}{1-3\epsilon}\|V_2^\top u_k\|+\frac{2\lambda_\ast}{\eta}\|V_2^\top u_k\|^2,
\end{align*}
where the second inequality comes from 
\begin{align*}
    &\|\frac{V_2^\top (\hat{A}-\lambda_* I)^{-1}(\hat{A}-A)u_k}{\|(\hat{A}-\lambda_R I)^{-1}(\hat{A}-A)u_k\|}\|\\
    =&\|\frac{V_2^\top (\hat{A}-\lambda_* I)^{-1}(\hat{A}-A)u_k}{\|(\hat{A}-\lambda_\ast I)^{-1}(\hat{A}-A)u_k\|}+\frac{V_2^\top (\hat{A}-\lambda_* I)^{-1}(\hat{A}-A)u_k}{\|(\hat{A}-\lambda_R I)^{-1}(\hat{A}-A)u_k\|}-\frac{V_2^\top (\hat{A}-\lambda_* I)^{-1}(\hat{A}-A)u_k}{\|(\hat{A}-\lambda_\ast I)^{-1}(\hat{A}-A)u_k\|}\|\\
    \leq&\|\frac{V_2^\top (\hat{A}-\lambda_* I)^{-1}(\hat{A}-A)u_k}{\|(\hat{A}-\lambda_\ast I)^{-1}(\hat{A}-A)u_k\|}\|\\
    +&\|\frac{V_2^\top (\hat{A}-\lambda_* I)^{-1}(\hat{A}-A)u_k(\|(\hat{A}-\lambda_\ast I)^{-1}(\hat{A}-A)u_k\|-\|(\hat{A}-\lambda_R I)^{-1}(\hat{A}-A)u_k\|)}{\|(\hat{A}-\lambda_\ast I)^{-1}(\hat{A}-A)u_k\|\|(\hat{A}-\lambda_R I)^{-1}(\hat{A}-A)u_k\|}\|\\
    \leq&\|\frac{V_2^\top (\hat{A}-\lambda_* I)^{-1}(\hat{A}-A)u_k}{\|(\hat{A}-\lambda_\ast I)^{-1}(\hat{A}-A)u_k\|}\|+\frac{\|(\hat{A}-\lambda_\ast I)^{-1}(\hat{A}-A)u_k\|-\|(\hat{A}-\lambda_R I)^{-1}(\hat{A}-A)u_k\|}{\|(\hat{A}-\lambda_R I)^{-1}(\hat{A}-A)u_k\|}\\
    \leq&\|\frac{V_2^\top (\hat{A}-\lambda_* I)^{-1}(\hat{A}-A)u_k}{\|(\hat{A}-\lambda_\ast I)^{-1}(\hat{A}-A)u_k\|}\|+\frac{\|((\hat{A}-\lambda_\ast I)^{-1}-(\hat{A}-\lambda_R I)^{-1})(\hat{A}-A)u_k\|}{\|(\hat{A}-\lambda_R I)^{-1}(\hat{A}-A)u_k\|}.\\
\end{align*}}

\rmkb{Theorem~\ref{thm:Convergence} shows that EPSI achieves linear--quadratic convergence, where the linear rate is on the order of \(\frac{\eta}{\lambda_1 - \lambda_2}.\) As the sketch quality \(\eta\) improves, the convergence rate also improves.  This result illustrates that EPSI mirrors the advantages of the Sketch-and-Precondition framework,  namely that a higher-quality sketched approximation leads to faster convergence. 

Note that with a fixed $\eta$, the algorithm's behavior within the $\eta$-neighborhood of the true solution is dominated by the linear term. Moreover, if $\eta$ is small, then by Davis–Kahan perturbation theory, the initial eigenvector estimate (obtained via low-rank approximation) will lie within this $\eta$-neighborhood. In other words, \textbf{a smaller $\eta$ results in a reduced convergence region and a closer initial guess to the true solution, thereby ensuring that the bound holds in practice.}
}

\subsection{EPSI for Computing $k$-SVD}
\label{section:ksvd}

In this section, we extend our algorithm to computing the first $k$ singular vectors of a matrix $D$.  The singular value decomposition (SVD) of a rank-$r$ matrix $D \in \mathbb{R}^{n \times m}$ corresponds to decomposing $D = V \Sigma U^T$, where $V \in \mathbb{R}^{n \times r}$ and $U \in \mathbb{R}^{m \times r}$ are two column-orthonormal matrices, and  $\Sigma = \mathrm{diag}(\sigma_1, \ldots, \sigma_r) \in \mathbb{R}^{r \times r} $is a non-negative diagonal matrix with $\sigma_1 \ge \sigma_2 \ge \cdots \ge \sigma_r \ge 0$. The columns of $V$ (resp.\ $U$) are called the left (resp.\ right) singular vectors of $D$ and the diagonal entries of $\Sigma$ are called the singular values of $D$. SVD is one of the most fundamental tools used in machine learning, computer vision, statistics, and operations research, and is essentially equivalent to principal component analysis (PCA) up to column averaging.  Since $\lambda_k(DD^\top)=\lambda_k(D^\top D)=(\sigma_k(D))^2$, solving k-svd is equivalent to solving first k eigendecomposition of $A= DD^\top \in \mathbb{R}^{n\times n}$. We denote $\lambda_1\geq\lambda_2\geq\cdots\geq\lambda_d\geq0$ as the eigenvalues of $M$, where $\lambda_{r+1}=\lambda_{r+2}=\cdots=\lambda_{d}=0$ because of rank $r$ of $A$. We will mainly work on $A$ instead of $D$ for simplicity.

The complexity of computing the top-$k$ singular vectors should depend on the relative gap 
\(
\frac{\lambda_k}{\lambda_k - \lambda_{k+1}}
\)
\cite{li2015convergence,musco2015randomized}.
However, if one naively performs top singular vector computation (1-SVD) repeatedly $k$ times, the running time would depend on all the intermediate gaps $\max\{\frac{\lambda_1}{\lambda_1-\lambda_2},\cdots,\frac{\lambda_k}{\lambda_k-\lambda_{k+1}}\}$. A breakthrough work~\cite{allen2016lazysvd} showed that we can tolerate having the computation of the $s$-th leading eigenvalue ''approximately'' lie in the span of the top $k$ singular vectors, at the cost of a multiplicative error
\(
\frac{\lambda_k}{\lambda_k - \lambda_{k+1}}.
\) Building on a similar idea from~\cite{allen2016lazysvd}, we propose \textbf{\texttt{Lazy-EPSI}}, a (randomized) preconditioned algorithm for computing the first $k$ singular vectors whose complexity depends only on the relative gap 
\(
\frac{\lambda_k}{\lambda_k - \lambda_{k+1}},
\)
and not on the intermediate gaps. Note that \emph{EPSI \textbf{cannot} serve as the approximate 1-SVD algorithm for LazySVD~\cite{allen2016lazysvd}} because LazySVD requires an anisotropic convergence speed for different singular vectors (depending on their singular values). This requirement contradicts the philosophy of preconditioning, which enforces isotropic convergence in every direction. Consequently, \emph{a new proof technique} is needed for our derivation.

\subsubsection{\texttt{Lazy-EPSI}}

In this section, we propose \textbf{\texttt{Lazy-EPSI}}, inspired by {Lazy-SVD}~\cite{allen2016lazysvd}. 
Our goal is to construct a rank-$k$ SVD algorithm by performing 1-SVD $k$ times in sequence. 
Crucially, the complexity of our method depends only on the ratio
\(
\frac{\lambda_k}{\lambda_k - \lambda_{k+1}},
\)
rather than on any intermediate gaps. Although the convergence of the $i$-th singular vector often depends on the intermediate gap 
\(
\frac{\lambda_i}{\lambda_{i} - \lambda_{i+1}},
\)
if the estimate has sufficiently small overlap with the previously computed $k-1$ singular vectors, 
its convergence may instead be governed by the larger gap 
\(
\frac{\lambda_i}{\lambda_i - \lambda_{k+1}}.
\) Therefore, we introduce a orthogonalization step that reduces the component of the estimated $i-$th singular vector in the directions of the first $k-1$ singular vectors to the order of the current estimation error. This step is crucial for ensuring the linear-quadratic convergence guarantee.
 The complete procedure is detailed in Algorithm~\ref{alg:iterative_matrix_approximation}.

\begin{algorithm}[]
\caption{\texttt{Lazy-EPSI} for Computing the First $k$ Singular Vectors}
\label{alg:iterative_matrix_approximation}
\begin{algorithmic}[1]
\Require $A$, the input matrix; $k \in \mathbb{Z}^+$, the number of components; $q_{\text{max}} \in \mathbb{Z}^+$, the maximum number of iterations.

\For{$q = 1$ to $q_{\text{max}}$}
    \State $U \gets [\,]$ \Comment{\footnotesize Initialize $U$ as an empty matrix.}
    \State \textbf{EPSI Iteration: Update first $k$ eigenspace estimation $U$}
    \For{$i = 1$ to $k$}
        \State $\hat{\lambda}_i \gets \frac{(u_q^i)^\top A u_q^i}{(u_q^i)^\top u_q^i}$ 
        \Comment{\footnotesize Compute rayleigh quotient estimation $\hat{\lambda}_i$ for the $i$th eigenvector.}
        \State{ \color{cyan!80} $\hat{u}_{q+1}^i \gets \big((I - UU^\top)\hat{A}(I - UU^\top) - \hat{\lambda}_i I\big)^{-1}((I - UU^\top)\hat{A}(I - UU^\top) - A)u_q^i$}
        \Comment{\footnotesize Update $\hat{u}_{q+1}^i$.}
        \State $U \gets orth([U, \hat{u}_{q+1}^i])$ 
        \Comment{\footnotesize Append gram-schmidt orthogonalized $\hat{u}_{q+1}^i$ to $U$.}
    \EndFor
    \State \textbf{Orthogonalization step:  Use Estimated Eigenvector as Rangefinder}
    \State $\Pi \gets U U^\dagger$ 
    \Comment{\footnotesize Compute the projection matrix $\Pi$.}
    \State $A_U \gets \Pi A \Pi$ 
    \Comment{\footnotesize Compute the projected matrix $A_U$.}
    \State $[u_{q+1}^1, u_{q+1}^2, \dots, u_{q+1}^k] \gets \text{SVD}(A_U)$ 
    \Comment{\footnotesize Perform SVD to update $u_{q+1}^i$.}
\EndFor
\State \Return $U$ \Comment{\footnotesize Return the updated matrix $U$.}
\end{algorithmic}
\end{algorithm}

\paragraph{Fast Computation of {\color{cyan!80}Step 6} of Algorithm \ref{alg:iterative_matrix_approximation} via Nystr\"{o}m Approximation} Suppose one has a rank-$l$ Nystr\"{o}m approximation $\hat A_{\text{nys}}=U_{\text{nys}}\hat{\Lambda}U^\top_{\text{nys}}$. We aim to demonstrate that the inversion $\big((I - UU^\top)\hat{A}(I - UU^\top) - \hat{\lambda}_i I\big)^{-1}$ in the {\color{cyan!80}Step 6} of \texttt{Lazy-EPSI} admits a closed-form solution, enabling fast computation. Specifically, the computational cost of this step is $\mathcal{O}(nl^2) + \mathcal{O}(l^3)$, where $n$ is the dimension of the matrix and $l$ is the rank of the Nystr\"{o}m approximation. This efficiency arises because the large-scale matrix operations scale linearly with $n$, and the inversion is confined to an $l \times l$ matrix, which remains computationally manageable for small $l$.

\begin{lemma}\label{lemma:woodbury} If we denote \(M = (I - UU^\top)\,\widehat{A}\,(I - UU^\top) \;-\; \widehat{\lambda}_i\,I,\)  where \(W = (I - UU^\top)\,U_{\text{nys}}\), then we have
\[
M^{-1} 
\;=\;
-\frac{1}{\widehat{\lambda}_i}\,I
\;+\;
\frac{1}{\widehat{\lambda}_i}\,
W\,
\bigl(-\,\widehat{\lambda}_i\,\widehat{\Lambda}^{-1} + W^\top W\bigr)^{-1}
W^\top,
\]
\end{lemma}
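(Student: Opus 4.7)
The plan is to recognize the right-hand side as an instance of the Woodbury matrix identity and verify it by direct substitution. The key observation is that because $\widehat{A}_{\text{nys}}$ has the low-rank factored form $\widehat{A}_{\text{nys}} = U_{\text{nys}}\widehat{\Lambda}U_{\text{nys}}^\top$ produced by Algorithm~\ref{alg:randnys}, the sandwiched product collapses nicely:
\begin{equation*}
(I - UU^\top)\,\widehat{A}\,(I - UU^\top) \;=\; (I - UU^\top)\,U_{\text{nys}}\,\widehat{\Lambda}\,U_{\text{nys}}^\top\,(I - UU^\top) \;=\; W\widehat{\Lambda}W^\top,
\end{equation*}
with $W = (I - UU^\top)U_{\text{nys}} \in \mathbb{R}^{n \times l}$. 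Hence $M$ can be written as $M = -\widehat{\sigma}_i I + W\widehat{\Lambda}W^\top$, which is exactly a scalar multiple of the identity plus a low-rank perturbation of rank at most $l$.

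Next I would invoke the Woodbury identity $(A + U_0 C V_0)^{-1} = A^{-1} - A^{-1}U_0(C^{-1} + V_0 A^{-1} U_0)^{-1}V_0 A^{-1}$ with the choices $A = -\widehat{\sigma}_i I$, $U_0 = W$, $C = \widehat{\Lambda}$, $V_0 = W^\top$. Plugging in gives
\begin{equation*}
M^{-1} = -\tfrac{1}{\widehat{\sigma}_i}I \;-\; \tfrac{1}{\widehat{\sigma}_i^2}\,W\Bigl(\widehat{\Lambda}^{-1} - \tfrac{1}{\widehat{\sigma}_i}W^\top W\Bigr)^{-1}W^\top.
\end{equation*}
Then I would factor $\tfrac{1}{\widehat{\sigma}_i}$ out of the inner parenthesis, using the identity $\bigl(\widehat{\Lambda}^{-1} - \tfrac{1}{\widehat{\sigma}_i}W^\top W\bigr)^{-1} = \widehat{\sigma}_i\bigl(\widehat{\sigma}_i\widehat{\Lambda}^{-1} - W^\top W\bigr)^{-1} = -\widehat{\sigma}_i\bigl(-\widehat{\sigma}_i\widehat{\Lambda}^{-1} + W^\top W\bigr)^{-1}$. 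Substituting this back and cancelling a factor of $\widehat{\sigma}_i$ produces exactly the claimed formula.

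Two small technical points deserve attention. First, $\widehat{\Lambda}$ must be invertible; this is ensured by Algorithm~\ref{alg:randnys}, since we may restrict the Nyström factorization to the subspace corresponding to strictly positive eigenvalues (and the dimension $l$ of $W$ is adjusted accordingly). Second, the Woodbury identity requires that $-\widehat{\sigma}_i\widehat{\Lambda}^{-1} + W^\top W$ be invertible; provided $\widehat{\sigma}_i$ is not a spurious eigenvalue of $W\widehat{\Lambda}W^\top$, this follows from $M$ being invertible. I expect the only mildly delicate part to be bookkeeping of signs and the factoring of $\widehat{\sigma}_i$ out of the middle inverse; there is no genuine obstacle, since the statement is essentially a direct specialisation of the Woodbury identity to a rank-$l$ update, and the computational benefit advertised in the surrounding text ($\mathcal{O}(nl) + \mathcal{O}(l^3)$) is automatic from the reduction of the required inversion from size $n$ to size $l$.
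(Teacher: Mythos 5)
Your proof is correct and follows essentially the same route as the paper's: both recognize $(I - UU^\top)\widehat{A}(I - UU^\top) = W\widehat{\Lambda}W^\top$ and then apply the Woodbury identity to the rank-$l$ perturbation of $-\widehat{\sigma}_i I$. The only cosmetic difference is that the paper first factors out $-\widehat{\sigma}_i$ and invokes the specialized form $(I + UCV^\top)^{-1} = I - U(C^{-1} + V^\top U)^{-1}V^\top$, whereas you apply the general Woodbury identity directly and then perform the sign/scaling bookkeeping afterward; the invertibility remarks you add are sound and slightly more careful than the paper's terse proof.
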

\begin{proof} Recall the Woodbury identity \((I + U\,C\,V^\top)^{-1}
\;=\; I \;-\; U\bigl(C^{-1} + V^\top U\bigr)^{-1}V^\top\), we have

\[
M^{-1}
\;=\;
-\,\frac{1}{\widehat{\lambda}_i}\,(I - \tfrac{1}{\widehat{\lambda}_i}\;W\,\widehat{\Lambda}\,W^\top)^{-1}
= 
-\,\frac{1}{\widehat{\lambda}_i}\left(I - W \,\Bigl(-\,\widehat{\lambda}_i\,\widehat{\Lambda}^{-1} + W^\top W\Bigr)^{-1} W^\top\right).
\]
\end{proof}

All large-scale multiplications involve $(I - UU^\top)$ and $U_{\text{nys}}$, which can be computed efficiently with a computational complexity of $\mathcal{O}(nl)$. The expensive matrix inversion is restricted to the small matrix $\bigl(-\,\widehat{\lambda}_i\,\widehat{\Lambda}^{-1} + W^\top W\bigr)$, which is of size $l \times l$ and incurs a computational cost of $\mathcal{O}(l^3)$. Since $l \ll n$, the overall computational cost for Step 6 is dominated by $\mathcal{O}(nl)$, ensuring that the computation remains scalable and efficient even for large $n$. Thus the computational cost of the \texttt{EPSI} iteration step (with the Sherman-Morrison-Woodbury implementation of inverse iteration) is $O(k(\underbrace{nl^2+l^3}_{\text{Inverse Iteration}}+\underbrace{mn}_{\text{compute }Au_t}))$. The orthogonalization step is the same computational cost as the Subspace Iteration algorithm which cost a computational $O(mnk)$. This leads to the final computational cost at $O(k(nl^2+l^3+mn))$. Our iteration is implemented solely using matrix-vector multiplication, allowing it to leverage matrix sparsity for efficient computation. In numerical experiments, this approach enabled the application of our algorithm to large-scale matrices.

We invoke Corollary 2.3 in \cite{frangella2023randomized} to show that Nyström approximation yields a approximation $\hat A$ satisfies $A-\eta I\preceq\hat A\preceq A+\eta I$ for some $\eta$.
\begin{lemma}
    For $p \geq 2$ and $l = 2p - 1$, we have the bound
    \begin{align*}
        E\| A - \hat A\| \leq \biggl(3 +\frac{4e^2}{p}srp(A)\biggr)\lambda_ p.
    \end{align*}
    The p-stable rank, $srp(A) = \lambda_p^{-1}\sum_{j=p}^n\lambda_j$ reflects decay in the tail eigenvalues.
\end{lemma}

\rmkb{To achieve a negative shift on $\hat A$ such that $A-3\eta I\preceq \hat A\preceq 1-\eta I$, one can directly subtract computed diagonal matrix $\hat \Lambda$ by $c \approx 2\eta$ based on knowledge of eigendecay. }
\subsubsection{Convergence Rate of \texttt{Lazy-EPSI}}

In this section, we present the linear-quadratic convergence rate of 
\texttt{Lazy-EPSI}. Specifically, the linear convergence 
factor is dependent on 
\(\frac{\lambda_k}{\lambda_k - \lambda_{k+1}},\) which aligns with the intuition from subspace iteration. Moreover this rate has a linear dependence on \(\eta\). We provide a sketch of the proof here.

\pfsketch{We begin by decomposing the error of \texttt{Lazy-EPSI} into two parts. First, we show that if the estimates of the former eigenvectors are sufficiently accurate, then the error is reduced by a factor of \(\frac{\eta}{\lambda_i - \lambda_{k+1}},\)
reflecting a linear convergence rate. Second, we bound the additional error arising from inexact projection by measuring how much the approximate singular vectors overlap with the previously computed subspace, \emph{i.e.} \(V_1^\top u_q^i\). Finally, in Lemma \ref{correction}, we prove that the orthogonalization step further refines this projection (quadratically), thereby accelerating the overall convergence to a quadratic rate.}

\lem{Local Convergence Rate for \texttt{Lazy-EPSI}}{lazyepsi}{
    Suppose that PSD matrix $A\in \mathbb{R}^{n\times n}$ has exact eigendecomposition $A=V\Lambda V^\top =V_1\Lambda_1V_1^\top +V_2\Lambda_2V_2^\top$, where $V_1$ has size $n\times k$ and the diagonal of $\Lambda_1,\Lambda_2$ is in descending order. The approximation $\hat{A}$ of $A$ satisfies $A-3\eta I\preceq \hat{A}\preceq A-\eta I$. Suppose that $U$, which is the $i-1$ eigenspace estimation, satisfies $\|V_{i:n}^\top U\|\leq \epsilon$ for small constant $\epsilon<\frac{\eta}{26(\lambda_1-\eta)}$. Then $\hat{u}_{q+1}^i=((I-UU^\top)\hat{A}(I-UU^\top)-\lambda_iI)^{-1}((I-UU^\top)\hat{A}(I-UU^\top)-A)u_{q}^i$ satisfies:
    \begin{align*}
        \frac{\|V_2^\top u^i_{q+1}\|}{\|u^i_{q+1}\|}\leq \frac{1}{1-\epsilon_0}( \underbrace{\frac{3.5\lambda_k}{\lambda_i-\lambda_{k+1}}\frac{\eta}{\lambda_k}\frac{\|V_2^\top u_q^i\|}{\|u_q^i\|}}_{\text{linear convergence}}+\underbrace{\frac{c_1(\lambda_i-\lambda_n)}{(\lambda_i-\lambda_{k+1})^2}\frac{\|(\lambda_i I-\Lambda_1)V_1^\top u_q^i\|}{\|u_q^i\|}}_{\text{error caused by imperfect projection}})
    \end{align*}
    where $\epsilon_0 = max\{\frac{4(\lambda_i-\lambda_{k+1})}{\eta}\|V_2^\top u_q^i\|,\frac{68\lambda_1}{\eta}\|V_2^\top U_q\|^2\}$ with $U_q=[u_q^1,u_q^2,\cdots,u_q^k]$, and $c_1$ is a small constant that depends on $\sqrt{k}$. 
}

\rmkb{Note that we assume $\|V_{i:n}^\top U\|\leq \epsilon$, which also implies another assumption that there are no identical eigenvalues in the first k eigenvalues. The assumption guarantees the local property of the initial guess, for the convenience of theoretical analysis, and will not impact the convergence rate, as shown in our result. Furthermore, the experiment shows that the algorithm still works even if there are identical eigenvalues. 

The dependence of $\epsilon$ on $\eta$ aligns with the quality of the initial guess, which lies in a $\eta$-neighborhood of the true solution.}

\paragraph{Orthogonalization step improves $\|(\lambda_iI-\Lambda_1)V_1^\top u^i_{q}\|$ Quadratically.} In this section, we show that 
the Orthogonalization step enables to improve the $\|(\lambda_iI-\Lambda_1)V_1^\top u^i_{q}\|$ Quadratically.
\begin{lemma}
\label{correction}
    Suppose that PSD matrix $A$ has SVD $A=V_1\Lambda_1 V_1^\top+V_2 \Lambda_2 V_2^\top $, where $\Lambda_1$ has leading $k$ eigenvalues $\lambda_1>\lambda_2>\cdots>\lambda_k$. Then for orthogonal matrix $U\in\mathbb{R}^{n\times k}$, the first $k$ eigenvector $[u^1,u^2,\cdots,u^k]$ of $UU^\top AUU^\top$, which is the projection of $A$ onto $U$, satisfies
    \begin{align*}
        \|(\lambda_iI-\Lambda_1)V_1^\top u^i\|&\leq 17\lambda_1\|V_2^\top U\|^2.\\
    \end{align*}
    The result indicates that after the orthogonalization step, the error introduced by imperfect projection can be quadratically dependent on $\|V_2^\top u^j\|$. 
\end{lemma}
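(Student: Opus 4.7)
The plan is to exploit both the orthogonality constraint $U^\top U = I$ and the Ritz optimality condition satisfied by each $u^i$ to show that the first-order perturbation terms cancel, leaving only quadratic remainders in $\|V_1 - U\|$. To set things up, I would parametrize $U = V_1 X + V_2 Y$ with $X \in \mathbb{R}^{k\times k}$ and $Y \in \mathbb{R}^{(n-k)\times k}$, so that $U^\top U = I$ becomes $X^\top X + Y^\top Y = I$. The SVD of $X$ then gives $\|I - X X^\top\| = \|I - X^\top X\| = \|Y^\top Y\| = \|Y\|^2$. Writing $V_1 - U = [V_1, V_2]\begin{pmatrix} I - X \\ -Y \end{pmatrix}$ and using that $[V_1, V_2]$ is orthogonal, I obtain $\|V_1 - U\|^2 = \|(I - X)^\top(I - X) + Y^\top Y\| \geq \|Y\|^2$, hence $\|I - X X^\top\| \leq \|V_1 - U\|^2$.

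Next, I would derive the key identity. Since $u^i \in \mathrm{range}(U)$, write $u^i = U c^i$ with $\|c^i\| = 1$. The Ritz optimality condition $U^\top (A - \hat{\sigma}_i I) u^i = 0$ gives $(A - \hat{\sigma}_i I) u^i = (I - U U^\top)\, A u^i$. Multiplying by $V_1^\top$ and using $V_1^\top A = \Sigma_1 V_1^\top$, $V_1^\top U = X$, and $V_2^\top U = Y$, this becomes
\begin{equation*}
(\Sigma_1 - \hat{\sigma}_i I)\, V_1^\top u^i \;=\; (I - X X^\top)\,\Sigma_1 X c^i \;-\; X Y^\top \Sigma_2 Y c^i .
\end{equation*}
Both summands on the right are manifestly quadratic in $\|V_1 - U\|$: the first is bounded by $\|I - X X^\top\|\cdot\sigma_1\cdot \|X\| \leq \sigma_1 \|V_1 - U\|^2$ (using $\|X\| \leq 1$), and the second by $\|X\|\cdot\|Y\|^2\cdot\sigma_{k+1} \leq \sigma_1 \|V_1 - U\|^2$. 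Combining yields $\|(\Sigma_1 - \hat{\sigma}_i I)\, V_1^\top u^i\| \leq 2 \sigma_1 \|V_1 - U\|^2$.

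Finally, I would pass from $\hat{\sigma}_i$ to $\sigma_i$ via the decomposition $(\sigma_i I - \Sigma_1)\, V_1^\top u^i = (\hat{\sigma}_i I - \Sigma_1)\, V_1^\top u^i + (\sigma_i - \hat{\sigma}_i)\, V_1^\top u^i$, which requires a quadratic bound on the Ritz value error $|\hat{\sigma}_i - \sigma_i|$. Since $\hat{\sigma}_i = (u^i)^\top A u^i$ is the Rayleigh quotient of $u^i$ and the Rayleigh quotient of $A$ is stationary at its eigenvectors, combined with $\|u^i - v_1^i\| = O(\|V_1 - U\|)$ (via Davis--Kahan using the gap $\sigma_k - \sigma_{k+1}$), one gets $|\hat{\sigma}_i - \sigma_i| = O(\sigma_1 \|V_1 - U\|^2)$. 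Putting everything together produces the claimed bound with an explicit absolute constant $c_1$.

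The main obstacle is making the Ritz-value error estimate fully quantitative with tracked constants. The cleanest alternative I see bypasses Davis--Kahan entirely: write $M = U^\top A U = \Sigma_1 + \Sigma_1 X_0 + X_0^\top \Sigma_1 + X_0^\top \Sigma_1 X_0 + Y^\top \Sigma_2 Y$ with $X = I + X_0$, and apply first-order eigenvalue perturbation. The diagonal first-order contribution to $\hat{\sigma}_i - \sigma_i$ is $2 \sigma_i (X_0)_{ii}$, which is \emph{itself} already quadratic because the $U^\top U = I$ relation forces $2(X_0)_{ii} = -(X_0^\top X_0 + Y^\top Y)_{ii}$; the off-diagonal linear terms contribute to second-order eigenvalue corrections, which are then bounded by the spectral gap $\sigma_k - \sigma_{k+1}$ assumed in the lemma.
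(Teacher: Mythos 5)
Your parametrization $U = V_1 X + V_2 Y$ together with the Ritz residual identity $(A - \hat\sigma_i I)u^i = (I - UU^\top)A u^i$ is a genuinely different and more structural route than the paper's, which bounds $\|Q Q^\top \Sigma_1 Q Q^\top e_i - \sigma_i e_i\|$ through element-wise combinatorics on the entries of $Q = V_1^\top U$ via Lemma~\ref{rowprod}. The key computation
\begin{equation*}
(\Sigma_1 - \hat\sigma_i I)\,V_1^\top u^i \;=\; \bigl((I - XX^\top)\Sigma_1 X \;-\; X Y^\top \Sigma_2 Y\bigr)\,c^i
\end{equation*}
is correct, and with $\|X\| \le 1$ and $\|I - XX^\top\| = \|Y\|^2 \le \|V_1-U\|^2$ it cleanly gives $\|(\Sigma_1 - \hat\sigma_i I)\,V_1^\top u^i\| \le 2\sigma_1\|V_1-U\|^2$ with an absolute constant.

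The step that does not go through as written is the passage from $\hat\sigma_i$ to $\sigma_i$. Both of your routes for bounding $|\hat\sigma_i - \sigma_i|$ introduce a dependence on the \emph{interior} gaps $\min_{j\le k,\, j\neq i}|\sigma_i - \sigma_j|$, which the lemma does not quantify (only $\sigma_1 > \cdots > \sigma_k$ is assumed) and which the paper's constant is free of. Davis--Kahan controls the individual Ritz vector $u^i$ only if $\sigma_i$ is separated from \emph{every other} $\sigma_j$ with $j\le k$, not merely from $\sigma_{k+1}$; if two top eigenvalues nearly coincide, the corresponding Ritz vectors can rotate by $\Omega(1)$ within the subspace, and the Rayleigh-quotient-stationarity bound degrades with them. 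Your ``cleanest alternative'' has the same problem: the second-order eigenvalue correction from the off-diagonal linear part of $E$ is $\sum_{j\neq i}|E_{ij}|^2/(\sigma_i - \sigma_j)$, which again involves interior gaps, not $\sigma_k - \sigma_{k+1}$.

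The hole can be closed gap-free while keeping your framework. Note that $\hat\sigma_i = \lambda_i(U^\top A U) = \lambda_i(X^\top\Sigma_1 X + Y^\top\Sigma_2 Y)$. Since $X^\top\Sigma_1 X$ and $\Sigma_1^{1/2}XX^\top\Sigma_1^{1/2}$ are both $k\times k$ with identical spectra, and $\Sigma_1^{1/2}XX^\top\Sigma_1^{1/2} = \Sigma_1 - \Sigma_1^{1/2}(I - XX^\top)\Sigma_1^{1/2}$, Weyl's inequality gives $|\lambda_i(X^\top\Sigma_1 X) - \sigma_i| \le \sigma_1\|I - XX^\top\| \le \sigma_1\|V_1-U\|^2$. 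A second application of Weyl to the perturbation $Y^\top\Sigma_2 Y$, with $\|Y^\top\Sigma_2 Y\| \le \sigma_{k+1}\|Y\|^2 \le \sigma_1\|V_1-U\|^2$, then yields $|\hat\sigma_i - \sigma_i| \le 2\sigma_1\|V_1-U\|^2$ with an absolute constant and no spectral-gap dependence. With this replacement your argument is complete, and it is tighter in dimension-dependence than the paper's combinatorial step, which runs through the Frobenius quantity $\|V_2^\top U\|_F$.
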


Based on Lemma \ref{correction}, we present the convergence proof of \texttt{Lazy-EPSI}. The key idea is that the error introduced by imperfect projection is mitigated by the orthogonalization step and can be bounded as a quadratic term in the convergence rate (via Lemma \ref{correction}). As a result, the method ultimately achieves a linear convergence rate that depends only on $\frac{\lambda_k}{\lambda_k - \lambda_{k+1}}$.

\thmp{Convergence Rate of \texttt{Lazy-EPSI}}{thm:Convergencelazy}{Suppose that PSD matrix $A\in \mathbb{R}^{n\times n}$ has exact SVD $A=V\Sigma V^\top =V_1\Sigma_1V_1^\top +V_2\Sigma_2V_2^\top$, where $V_1$ has size $n\times k$ and $diag(\Sigma_1,\Sigma_2)$ is in descending order. {The approximation $\hat{A}$ of $A$ satisfies $A-3\eta I\preceq \hat{A}\preceq A-\eta I$ with some distortion factor $\eta<\frac{\lambda_{k}-\lambda_{k+1}}{3.5\sqrt{2}}$.} Suppose that the initial guess $U_0=[u_0^1,u_0^2,\cdots,u_0^k]$ satisfies the assumption of lemma \ref{lazyepsi}. Denote $U_q$ is the orthonormal basis of $q$-th iteration $[u_q^1,u_q^2,\cdots,u_q^k]$, then the convergence of Lazy-EPSI is guaranteed by:
\begin{align*}
    \|V_2^\top  U_{q+1}\|_F&\leq c_0(\frac{3.5\sqrt{2}\lambda_k}{\lambda_k-\lambda_{k+1}}\frac{\eta}{\lambda_k}\|V_2^\top U_q\|_F+c_{2}\|V_2^\top U_q\|_F^2),\\
\end{align*}
where $c_0=1+\frac{c\lambda_1 k\|V_2^\top U_q\|}{\eta}$ and $c_{2}=\frac{17\sqrt{2k}c_1\lambda_1(\lambda_1-\lambda_n)}{(\lambda_k-\lambda_{k+1})^2}+\frac{36\lambda_1}{\eta}$.
}{ In the following proof, we change the sequence of two part in our algorithm, which is equivalent to the original algorithm but easier to illustrate. In one single iteration of Lazy-EPSI, we update $u_q$ by the first correction part and the second EPSI part, which can be expressed as 
    \begin{align*}
        \hat{u}_{q+1}^i&= \text{eigenvector}_i(\Pi A\Pi),\\
        u_{q+1}^i &= \frac{((I-\hat U_{i-1}\hat U_{i-1}^\top)\hat A(I-\hat U_{i-1}\hat U_{i-1}^\top)-\hat\lambda_i I )^{-1}((I-\hat U_{i-1}\hat U_{i-1}^\top)\hat A(I-\hat U_{i-1}\hat U_{i-1}^\top)- A)\hat{u}_{q+1}^i}{\|((I-\hat U_{i-1}\hat U_{i-1}^\top)\hat A(I-\hat U_{i-1}\hat U_{i-1}^\top)-\hat\lambda_i I )^{-1}((I-\hat U_{i-1}\hat U_{i-1}^\top)\hat A(I-\hat U_{i-1}\hat U_{i-1}^\top)- A)\hat{u}_{q+1}^i\|},\\
    \end{align*}
    where $\hat \lambda_i$ is rayleigh estimation of $\hat{u}_{q+1}^i$, $\Pi=U_qU_q^\top $ for $U_q$ the orthonormal basis of $span\{u_q^1,u_q^2,\cdots,u_q^k\}$, and $\hat U_{i-1}=orth([ u_{q+1}^1, u_{q+1}^2,\cdots, u_{q+1}^{i-1}])$. Denote $\hat U^{all}_{q+1}=[\hat u_{q+1}^1,\hat u_{q+1}^2,\cdots,\hat u_{q+1}^i]$. 

    Note that $\hat U_{q+1}^{all}$ is only a rotation of $U_q$, and thus $\|V_2^\top  \hat U_{q+1}^{all}\|=\|V_2^\top U_q\|$. 
    
    We first focus on the second part in updating $\hat u_{q+1}^i$ to $u_{q+1}^i$. Denote $\hat \Pi_{i}=I-\hat U_{i-1}\hat U_{i-1}^\top$, then the updating has the form
    \begin{align}
    \label{decompose2}\scriptsize
        cu_{q+1}^i &= (\hat\Pi_i\hat A\hat\Pi_i-\hat\lambda_i I )^{-1}(\hat\Pi_i\hat A\hat\Pi_i- A)\hat{u}_{q+1}^i\notag\\
        &= (\hat\Pi_i\hat A\hat\Pi_i-\lambda_i I )^{-1}(\hat\Pi_i\hat A\hat\Pi_i- A)\hat{u}_{q+1}^i\notag\\
        &+((\hat\Pi_i\hat A\hat\Pi_i-\hat\lambda_i I )^{-1}-(\hat\Pi_i\hat A\hat\Pi_i-\lambda_i I )^{-1})(\hat\Pi_i\hat A\hat\Pi_i- A)\hat{u}_{q+1}^i.
    \end{align}
    where $c=\|(\hat\Pi_i\hat A\hat\Pi_i-\hat\lambda_i I )^{-1}(\hat\Pi_i\hat A\hat\Pi_i- A)\hat{u}_{q+1}^i\|$ is the normalization constant. Similar to analysis in EPSI, the second term in \eqref{decompose2} is a quadratic term of $\|V_2^\top \hat u_{q+1}\|$, since
    \begin{align*}
        &\frac{1}{c}\|((\hat\Pi_i\hat A\hat\Pi_i-\hat\lambda_i I )^{-1}-(\hat\Pi_i\hat A\hat\Pi_i-\lambda_i I )^{-1})(\hat\Pi_i\hat A\hat\Pi_i- A)\hat{u}_{q+1}^i\|\\
        =&\|I-((I-\hat U_{i-1}\hat U_{i-1}^\top)\hat A(I-\hat U_{i-1}\hat U_{i-1}^\top)-\lambda_i I )^{-1}((I-\hat U_{i-1}\hat U_{i-1}^\top)\hat A(I-\hat U_{i-1}\hat U_{i-1}^\top)-\hat\lambda_i I )\|\\
        \leq&\frac{2}{\eta}|\lambda_i-\hat \lambda_i|.
    \end{align*}
    Note that 
    \begin{align*}
        |\lambda_i-\hat \lambda_i|&=|\lambda_i-\hat u_{q+1}^\top V\Lambda V^\top \hat u_{q+1}|\\
        &=|\lambda_i-\hat u_{q+1}^\top V_1\Lambda_1 V_1^\top \hat u_{q+1}-\hat u_{q+1}^\top V_2\Lambda_2 V_2^\top \hat u_{q+1}|\\
        &\leq \|(\lambda_i I-\Lambda_1)V_1^\top \hat u_{q+1}\|+\lambda_{k+1}\|V_2^\top \hat u_{q+1}\|^2\\
        &\leq 18\lambda_1\|V_2^\top \hat U_{q+1}^{all}\|.
    \end{align*}
    Combining with lemma \ref{correction} and lemma \ref{lazyepsi}, we have
\begin{align*}
    c\frac{\|V_2^\top  u^i_{q+1}\|}{\|u_{q+1}\|}&\leq \frac{3.5\eta}{\lambda_i-\lambda_{k+1}}\|V_2^\top \hat u^i_{q+1}\|+\frac{c_1(\lambda_i-\lambda_n)}{(\lambda_i-\lambda_{k+1})^2}\|(\lambda_i I-\Sigma_1)V_1^\top \hat u^i_{q+1}\|+\frac{36\lambda_1}{\eta}\|V_2^\top \hat U_{q+1}^{all}\|^2\\
    &\leq \frac{3.5\eta}{\lambda_i-\lambda_{k+1}}\|V_2^\top \hat u^i_{q+1}\|+(\frac{17c_1\lambda_1(\lambda_i-\lambda_n)}{(\lambda_i-\lambda_{k+1})^2}+\frac{36\lambda_1}{\eta})\|V_2^\top \hat U_{q+1}^{all}\|^2,
\end{align*}
where $c$ is scaling constant $1-\epsilon_0$ in lemma \ref{lazyepsi}.
Let $T=[\frac{u^1_{q+1}}{\|u_{q+1}\|},\frac{ u^2_{q+1}}{\|u_{q+1}\|},\cdots,\frac{u^k_{q+1}}{\|u_{q+1}\|}]$, then by cauchy-schwarz 
\begin{align*}
    c\|V_2^\top  T\|_F&\leq \sqrt{2}(\frac{3.5\eta}{\lambda_k-\lambda_{k+1}}\|V_2^\top \hat U_{q+1}^{all}\|_F+(\frac{17\sqrt{k}c_1\lambda_1(\lambda_1-\lambda_n)}{(\lambda_k-\lambda_{k+1})^2}+\frac{36\lambda_1}{\eta}\|V_2^\top \hat U_{q+1}^{all}\|_F^2)\\
\end{align*}

Finally, we show that the difference between the orthogonal basis $U_{q+1}$ and $T$ in $V_2$ space is small. Note that $T=U_{q+1}R$ by Gram-Schmidt orthogonalization. Since $\langle u_{q+1}^i,u_{q+1}^j\rangle = c\langle\hat u_{q+1}^i+a^i,\hat u_{q+1}^j+a^j\rangle$, where $\langle\hat u_{q+1}^i,\hat u_{q+1}^j\rangle=0$ and $\|a^i\| = \|((I-\hat U_{i-1}\hat U_{i-1}^\top)\hat A(I-\hat U_{i-1}\hat U_{i-1}^\top)-\hat\lambda_i I )^{-1}(\hat \lambda_i I- A)\hat{u}_{q+1}^i\|\leq \frac{\lambda_1\|V_2^\top \hat U_{q+1}^{all}\|}{\eta}$, then $\|R-I\|\leq\frac{c\lambda_1k\|V_2^\top \hat U_{q+1}^{all}\|}{\eta}$ for some constant $c$ and thus
\begin{align*}
    \|V_2^\top U_{q+1}\|_F=\|V_2^\top TR\|_F\leq \|V_2^\top T\|_F\|R\|\leq  \|V_2^\top T\|_F(1+\frac{c\lambda_1k\|V_2^\top \hat U_{q+1}^{all}\|}{\eta})
\end{align*}

Finally, with $\|V_2^\top  \hat U_{q+1}^{all}\|=\|V_2^\top U_q\|$ and set $c_0=(1+\frac{c\lambda_1k\|V_2^\top U_q\|}{\eta})(1-\frac{4(\lambda_1-\lambda_{k+1})\|V_2^\top U_q\|}{\eta})=1+\frac{c\lambda_1 k\|V_2^\top U_q\|}{\eta}$ for some small constant $c$, we have
\begin{align*}
    \|V_2^\top  U_{q+1}\|_F&\leq c_0(\frac{3.5\sqrt{2}\eta}{\lambda_k-\lambda_{k+1}}\|V_2^\top U_q\|_F+(\frac{17\sqrt{2k}c_1\lambda_1(\lambda_1-\lambda_n)}{(\lambda_k-\lambda_{k+1})^2}+\frac{36\lambda_1}{\eta})\|V_2^\top U_q\|_F^2)\\
\end{align*}}

\rmkb{Theorem \ref{thm:Convergencelazy} establishes that \texttt{Lazy-EPSI} exhibits a linear-quadratic convergence pattern. Specifically, the linear term $\frac{3.5\sqrt{2}\lambda_k}{\lambda_k-\lambda_{k+1}}\frac{\eta}{\lambda_k}\|V_2^\top U_q\|_F$ dominates the convergence behavior, while the contributions from the quadratic terms $(\frac{17\sqrt{2k}c_1\lambda_1(\lambda_1-\lambda_n)}{(\lambda_k-\lambda_{k+1})^2}+\frac{36\lambda_1}{\eta})\|V_2^\top U_q\|_F^2$ is relatively minor. Though there is a $\frac{1}{\eta}$ factor in quadratic terms, we remark that the initial guess generated by Nystrom approximation with $\eta$ distortion lies in an $\eta$-neighborhood of true solution, thus justify the validity of the term.

Notably, the linear convergence rate depends solely on $\frac{\lambda_k}{\lambda_k-\lambda_{k+1}}$, rather than on the intermediate rate $\max\{\frac{\lambda_1}{\lambda_1-\lambda_2},\cdots,\frac{\lambda_k}{\lambda_k-\lambda_{k+1}}\}$. 

}

\section{\texttt{EPSI} as Sketch-and-Precondition}
\label{section:EPSIasNewton}

As shown in our theoretical analysis (Theorem \ref{thm:Convergence}), the convergence rate of \texttt{EPSI} (Error-Powered Sketched Inverse Iteration) improves when one employs a (random) embedding with better distortion, mirroring the benefits of sketch-and-precondition algorithms for over-parameterized least squares. To illuminate the preconditioning effect of \texttt{EPSI}, we can interpret \texttt{EPSI} as a form of the Newton Sketch method \cite{xu2016sub,pilanci2017newton}—a randomized approach that accelerates second-order optimization by approximating the Hessian with a projected sketch, thereby reducing computation costs while maintaining rapid convergence. Indeed, Newton Sketch falls under the broader “sketch-and-precondition” paradigm because it constructs a preconditioner from a randomly sketched approximation of the Hessian, which accelerates the solver’s convergence. However, even though \texttt{EPSI} can be regarded as a Newton Sketch method when minimizing the Lagrangian form of the eigenvalue problem, \emph{this conceptual parallel does not allow us to directly transfer theoretical results from Newton Sketch to \texttt{EPSI}, depsite both methods exhibiting similar linear-quadratic convergence rates}. The key distinctions are:
\vspace{-0.1in}
\begin{itemize}[itemindent=0.5cm,leftmargin=0.5cm]
\setlength{\itemsep}{0pt}
\setlength{\parsep}{0pt}
\setlength{\parskip}{0pt}
    \item The local convergence regime and rate of Newton Sketch methods \cite{pilanci2017newton} hinge on having a strictly positive smallest eigenvalue of the Hessian. However, in an eigenvalue problem, the Hessian of the Lagrangian with respect to the eigenvectors at optimality has a zero eigenvalue, rendering the analysis in\cite{pilanci2017newton} inapplicable. A related phenomenon occurs with the power method (which can be viewed as gradient descent for the eigenvalue problem): despite its similarity to gradient descent, the singularity of the Hessian necessitates a specialized analysis beyond classical gradient-based convergence arguments.
    \item Extending $k$-SVD using a Newton sketch requires solving a non-convex problem, which limits its theoretical guarantees. In contrast, our \texttt{Lazy-EPSI} method achieves linear--quadratic convergence that
depends solely on the ratio $\frac{\lambda_{k} - \lambda_{k+1}}{\lambda_{k}}$,
mirroring the behavior of subspace iteration methods.
\end{itemize}
\vspace{-0.1in}
These distinctions make this connection insightful for understanding but insufficient as a basis for rigorous theoretical proofs.

Computationally, our approach leverages the Hessian structure in the eigenvalue problem, which retains the same form \(\lambda I - A\) (with varying \(\lambda\)) throughout the optimization procedure. Because this form remains consistent, a single low-rank approximation of \(A\) suffices for efficient inversion at each step. In contrast, the Newton Sketch method \cite{pilanci2017newton} requires solving a sketched optimization subproblem at every iteration. By utilizing the Nystr\"om approximation and applying the Woodbury identity (see Lemma~\ref{lemma:woodbury}), we achieve fast computation of the inverse in each iteration.

\paragraph{\texttt{EPSI} as Newton Sketch}

In this section, we show that \texttt{EPSI} can be viewed as a Newton Sketch \cite{xu2016sub,pilanci2017newton,bollapragada2019exact,li2020subsampled,ye2021approximate,erdogdu2015convergence}—an inexact second-order optimization method that employs randomized Hessian approximations by projecting high-dimensional data into a lower-dimensional space, thereby reducing computation while preserving rapid convergence.  The connection builds on the work of \cite{tapia2018inverse}, which shows that inverse power methods can be reinterpreted as Newton methods for solving the Lagrangian form of the eigenvalue problem $F(u,\lambda)=\frac{1}{2}u^\top A u-\frac{\lambda}{2}(u^\top u -1)$. Thus, if we have a matrix \(\hat A\) that approximates \(A\) (and whose inverse is easy to compute), we can perform the following iteration:
\[
u_{k+1} 
= u_k 
- \underbrace{(\hat A - \lambda I)^{-1}}_{\text{approximated }(\nabla^2 F)^{-1}}
\underbrace{(A - \lambda I) \, u_k}_{\nabla F}
= (\hat A - \lambda I)^{-1}(A - \hat A)\,u_k.
\]


While \texttt{EPSI} shows strong potential as a practical solver, it also opens the door to developing faster algorithms for eigenvector and singular vector computations. Beyond straightforward adaptations of more advanced (inexact) second-order methods—such as Newton-CG \cite{dembo1983truncated,byrd2011use,bollapragada2019exact,royer2020newton} or extend this line of work to broader applications, including canonical component analysis and generalized PCA \cite{allen2017doubly}, as well as streaming $k$-SVD \cite{allen2017first}. Below, we summarize several promising and technically nontrivial directions that we believe will shape future research in this field.

\section{Numerical Experiment}

In this section, we demonstrate the practicality of our \texttt{EPSI} framework on both synthetic random matrices and real-world application matrices\footnote{All experiments were conducted on a 16GB MacBook Pro with an M3 chip using MATLAB R2024a.}. We compare it to the block power method \cite{gu2015subspace, musco2015randomized} to validate our theoretical findings and illustrate the effect of preconditioning. We demonstrate the convergence of eigenspace either by $|\hat\lambda_i-\lambda_i|$, $\|A\hat{u}_i - \hat{\lambda}_i \hat{u}_i\|$ or the proportion of the estimated singular vector outside the top $k$ singular vectors, measured by $\|V_2^\top \hat{u}_i\|$, note that they are in some sense equivalent.

\vspace{-0.1in}
\paragraph{Dense Matrix - Synthetic Data}We generate test matrix $A$ as the following:
\vspace{-0.1in}
\begin{itemize}
\setlength{\itemsep}{0pt}
\setlength{\parsep}{0pt}
\setlength{\parskip}{0pt}    
\item Choose Haar random orthogonal matrices \( U = [U_1 \ U_2] \) in \( \mathbb{R}^{m \times m} \) and \( V \) in \( \mathbb{R}^{n \times n} \), and partition \( U \) so that \( U_1 \in \mathbb{R}^{m \times n} \).
    \item Set \( A := U_1 \Sigma V^T \) where \( \Sigma \) is a diagonal matrix with condition number $\kappa$, which has a exponential decay from $1$ to $\kappa^{-1}$. 
\end{itemize}
\vspace{-0.15in}
Figure \ref{fig:precondition} illustrates the preconditioning effect of \texttt{Lazy-EPSI}, while Figure \ref{fig:differentsektch} demonstrates that using more sketches accelerates the convergence of \texttt{Lazy-EPSI}.

\begin{figure}[h]
\centering
  \subfigure[Pre-condition Effect of \texttt{Lazy-EPSI}]{\includegraphics[width=0.45\textwidth]{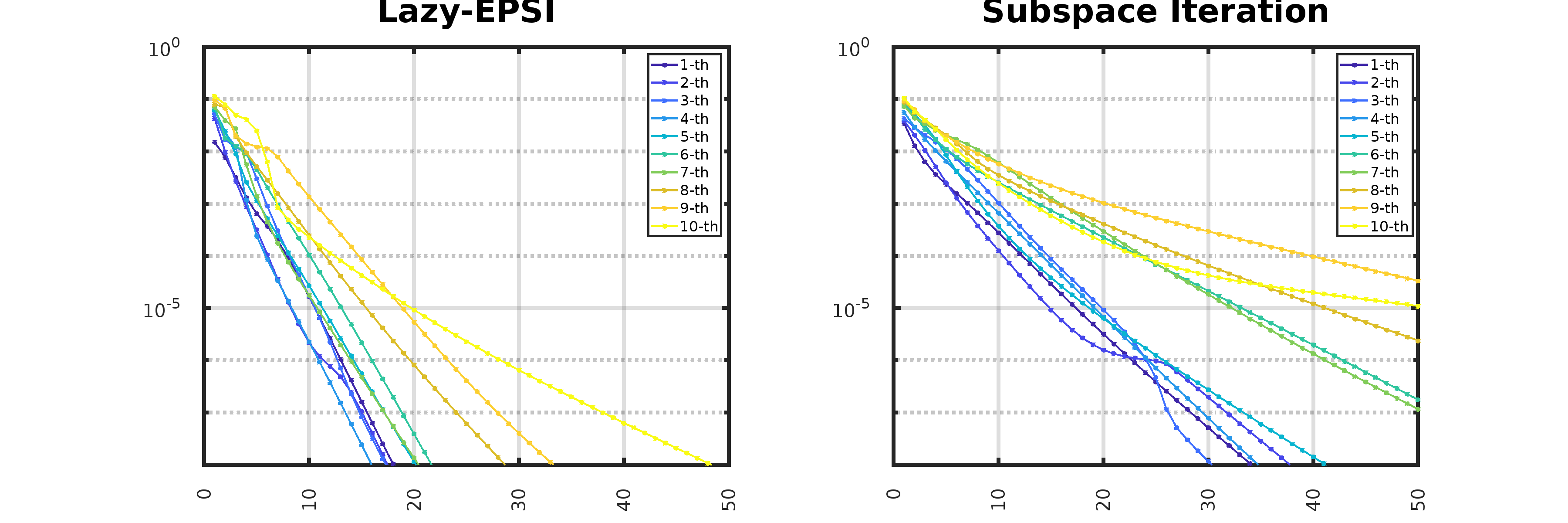}\label{fig:precondition}}
  \subfigure[Better $\hat A$ leads to faster convergence]{\raisebox{0.1in}{\includegraphics[width =0.45\textwidth]{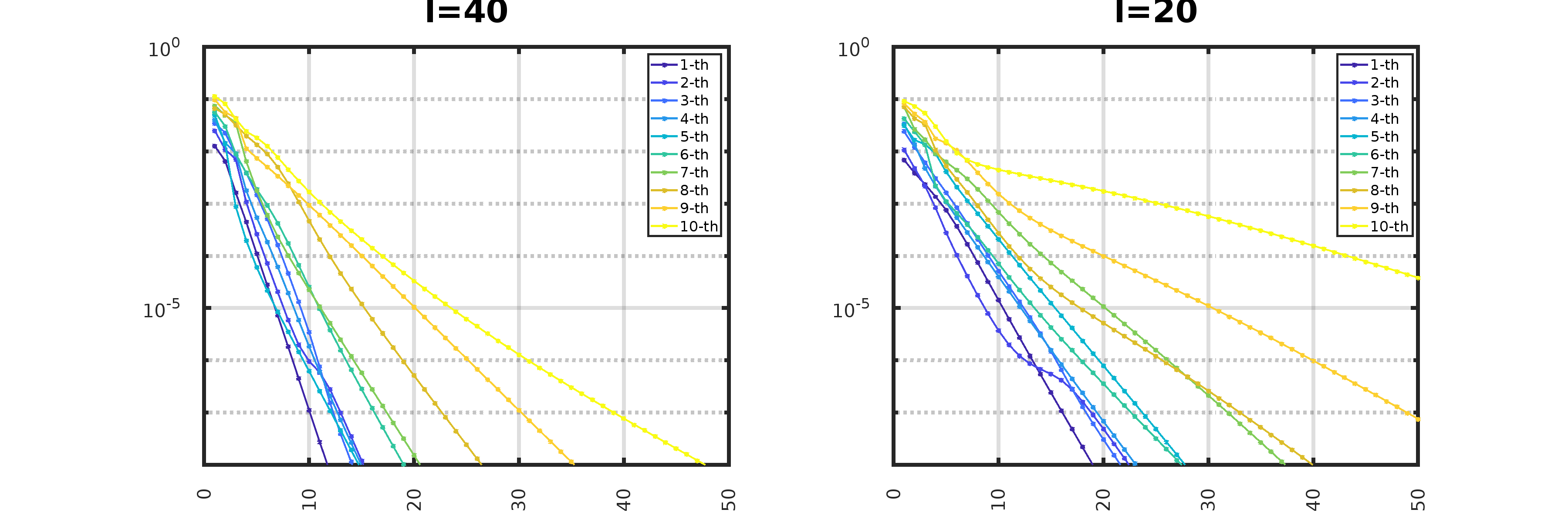}}\label{fig:differentsektch}}
  \vspace{-0.15in}
  \caption{\texttt{Lazy-EPSI} on Synthetic Matrix.}\label{fig:synthetic data}
  \vspace{-0.15in}
\end{figure}

\paragraph{Scalability of \texttt{EPSI} and Selection of the Shift} 

In Figure \ref{fig:scaling}, we demonstrated the scalability of \texttt{Lazy-EPSI} with a sketch size $\ell=10k$. Lines with different color show the convergence of the $i-th$ eigenvalue estimation error $\|\hat \lambda_i-\lambda_i\|$. The first 400 eigenvalue of synthetic matrix has a exponential decay from $1$ to $10^{-3}$ and set all all other eigenvlaues set to be $10^{-3}$. We demonstrate that our convergence rate only depends on the eigenvalue gap but not depend on the size of the matrix. 

\begin{figure}[H]
\centering
\vspace{-0.1in}
  \subfigure[\texttt{$(m,n)=(2000,2000)$}]{\includegraphics[width=0.8\textwidth]{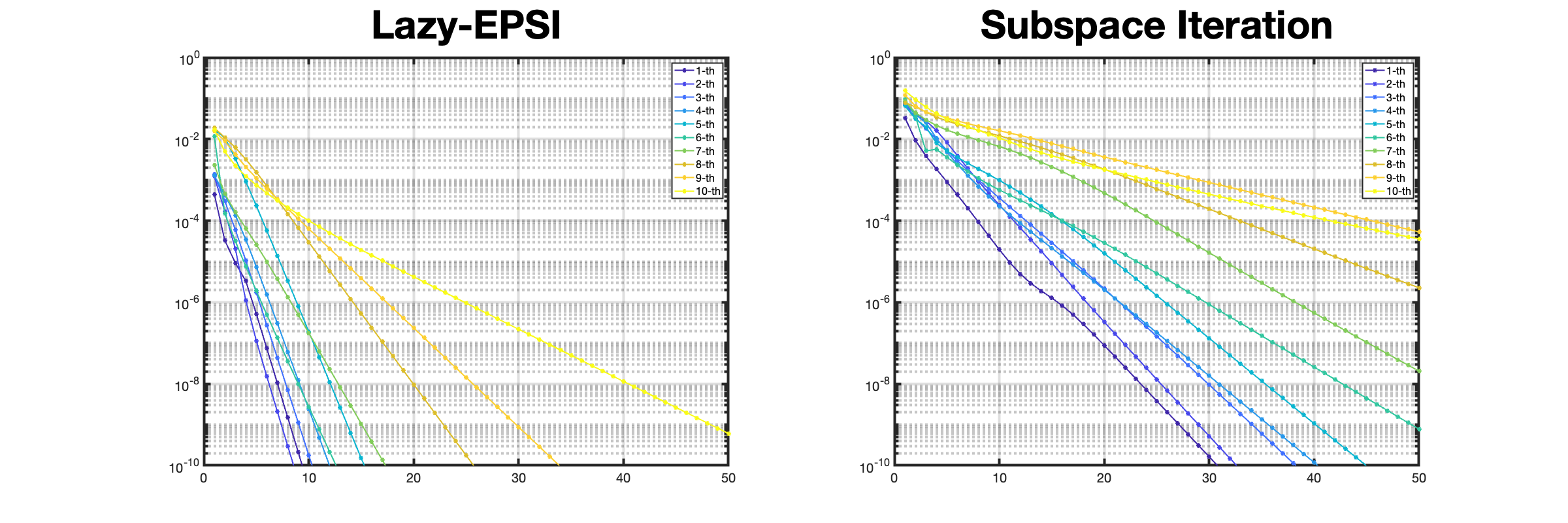}\label{fig:synthetic2000}}\\
  \subfigure[\texttt{$(m,n)=(4000,4000)$}]{\includegraphics[width =0.8\textwidth]{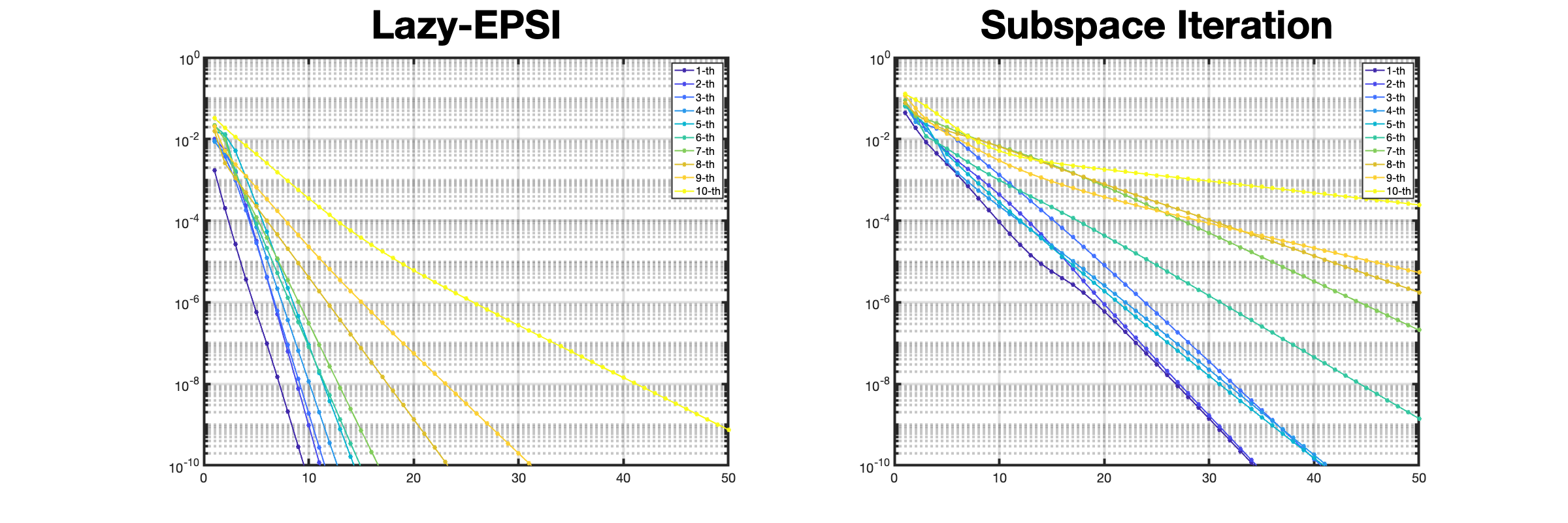}\label{fig:synthetic4000}}\\
  \subfigure[\texttt{$(m,n)=(8000,8000)$}]{\includegraphics[width =0.8\textwidth]{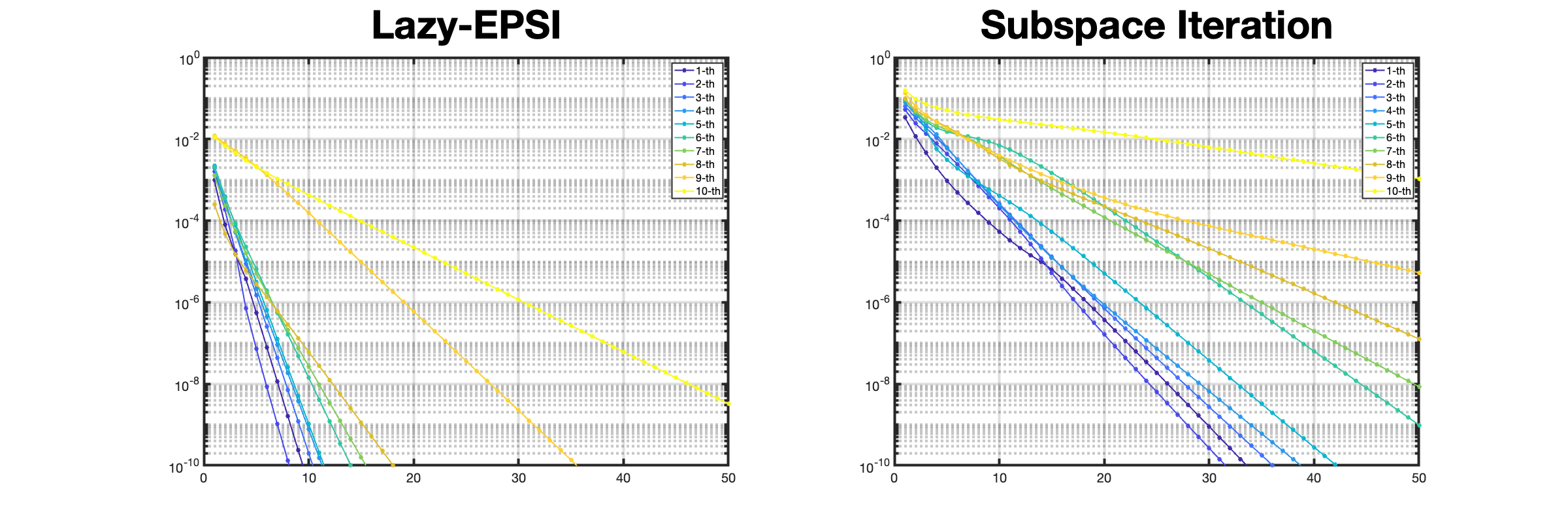}\label{fig:synthetic8000}}
  \vspace{-0.1in}
  \caption{Comparison of \texttt{Lazy-EPSI} and Subspace Iteration on synthetic squared matrix with respect to the error of rayleigh quotient eigenvalue estimation $|\hat\lambda_i-\lambda_i|$.}\label{fig:scaling}
  \vspace{-0.1in}
\end{figure}

\paragraph{runtime comparison between EPSI and Davidson's method} We compare the runtime of EPSI, Davidson's method in \cite{1975JCoPh..17...87D} and inexact RQI in \cite{simoncini2002inexact} with conjugate gradient inner solver and power method as initialization on solving the first eigenpair of a synthetic dense matrix. Note that EPSI and inexact RQI have a substantially fast convergence rate than Davidson's method. However, inexact RQI is hard to implement in practice as it needs a good initialization, otherwise it can easily converge to the other eigenpair, and this explains a longer initialization of inexact RQI in the figure \eqref{fig:scaling_Ux}.

\begin{figure*}[h]
\centering
\vspace{-0.1in}

  \subfigure[\texttt{$n=2000,\kappa=10^{3}$}]{\includegraphics[width=0.32\textwidth]{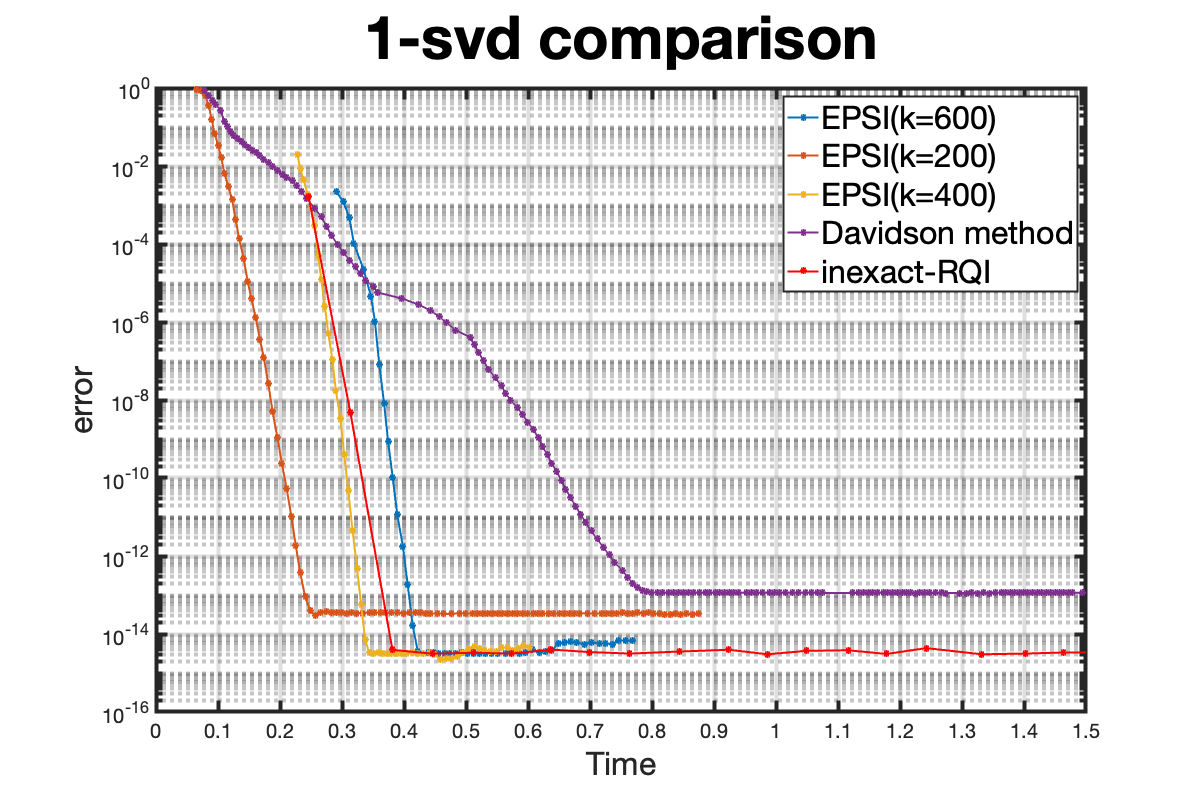}\label{fig:}}
  \subfigure[\texttt{$n=2000,\kappa=10^{6}$}]{\includegraphics[width =0.32\textwidth]{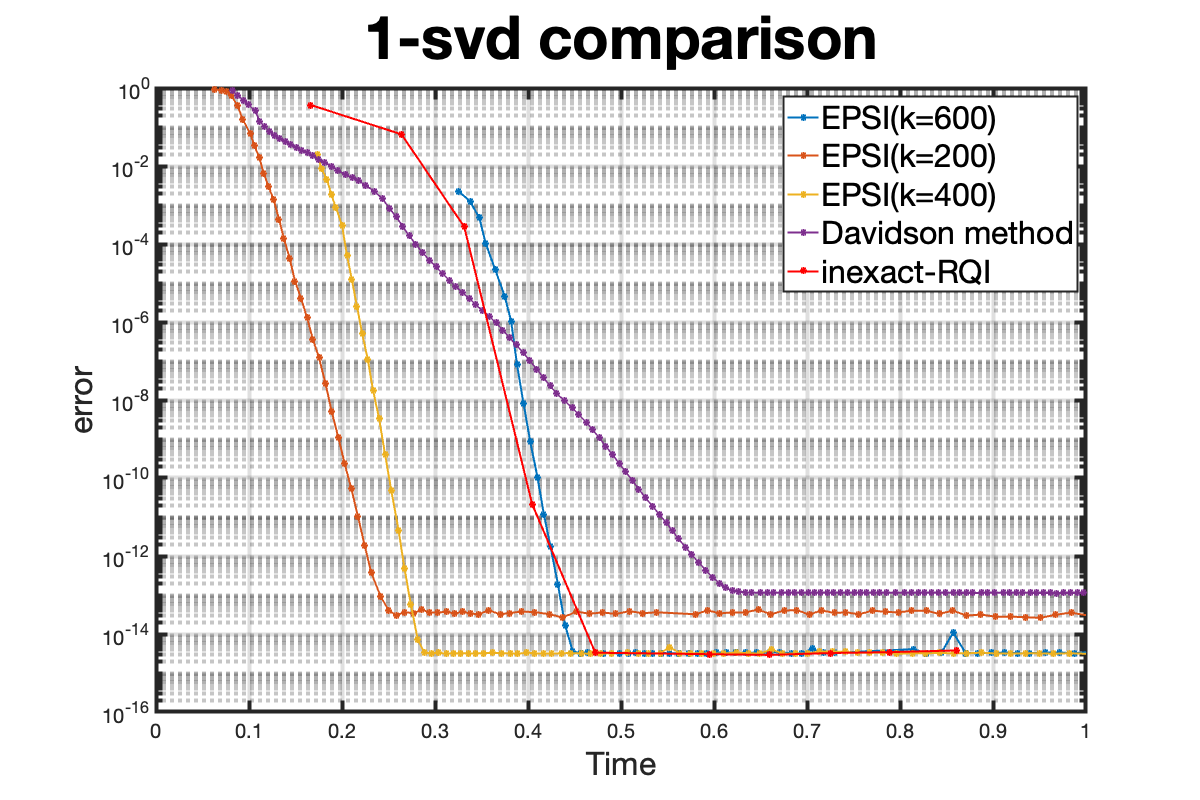}\label{fig:synthetic4000}}
  \subfigure[\texttt{$n=4000,\kappa=10^{6}$}]{\includegraphics[width =0.32\textwidth]{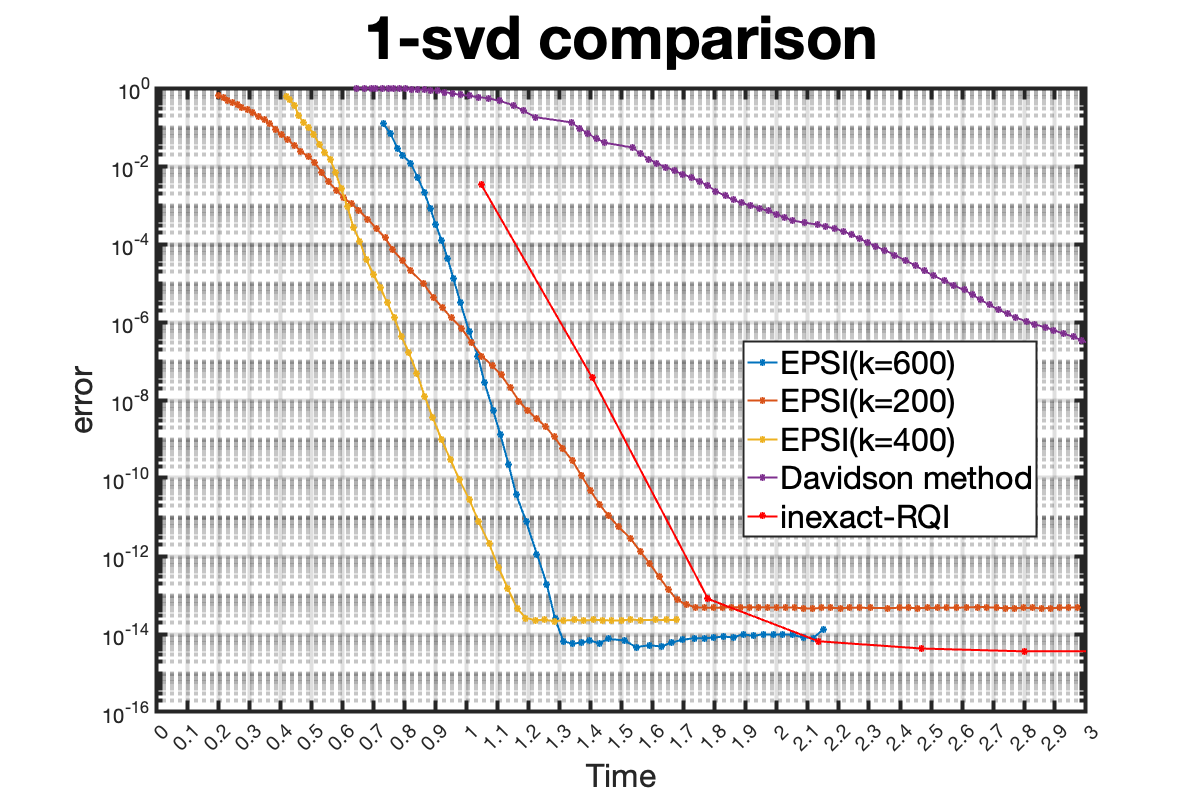}\label{fig:synthetic4000}}
  \vspace{-0.1in}
  \caption{Comparison of EPSI with k-dim Nystrom approximation and Davidson's method on synthetic $2000\times 2000$ matrix with respect to the convergence of component on the last $n-1$ eigenvectors $\|V_2^\top \hat u_i\|$.}\label{fig:scaling_Ux}
  \vspace{-0.1in}
\end{figure*}

\paragraph{runtime comparison between EPSI and Davidson's method on low rank data} In many cases, the data matrix $A\in \mathbb{R}^{n\times n}$ can be decomposed as $A=L_1L_2^\top + E$, where $L_1L_2^\top $ is the underlying low rank ground truth with intrinsic dimension $s\ll n$, and $E$ is white noise. In this case, the eigenvalue of $AA^\top$ will decay really quickly after the first $s$ dimension. We compare EPSI with Davidson's method and inexact RQI in this type of matrix in figure \eqref{fig:low_rank_data}. We use $randn()$ function in matlab to generate $L_1,L_2$ with std $\sigma_1$ and $E$ with std $\sigma_2$. Matrix with different $\sigma_1,\sigma_2,s$ has different first $s$ eigendecay and 'tail eigendecay', which is $\frac{\lambda_k}{\lambda_1}$ for $k$ larger than $s$. The result of experiments support that the performance of EPSI with a Nyström approximation replies on the eigendecay after roughly $\frac{k}{2}$, which makes EPSI powerful on low rank data with a small intrinsic dimension $s$ and low level of noise.

\begin{figure*}[h]
\centering
\vspace{-0.1in}

  \subfigure[\texttt{$s=100,\sigma_1=0.1,\sigma_2=0.05$}]{
  \includegraphics[width=0.26\textwidth]{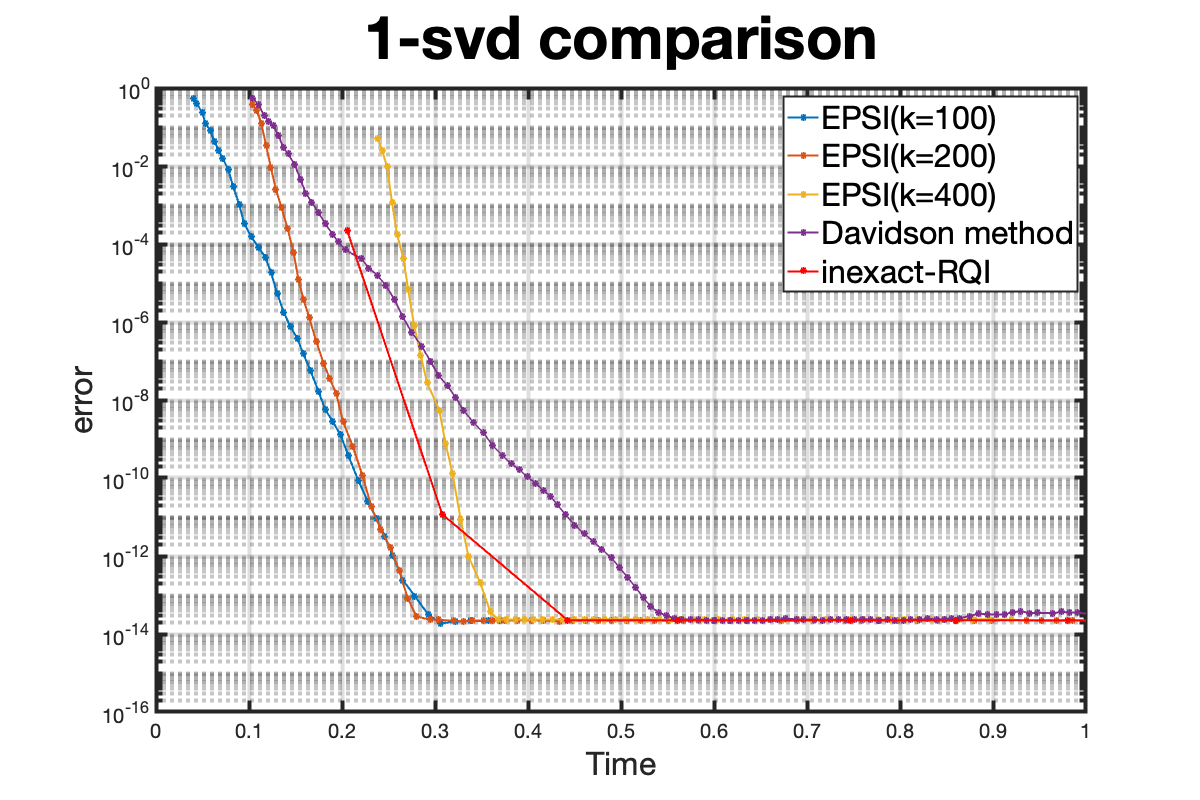}
  \includegraphics[width =0.18\textwidth]{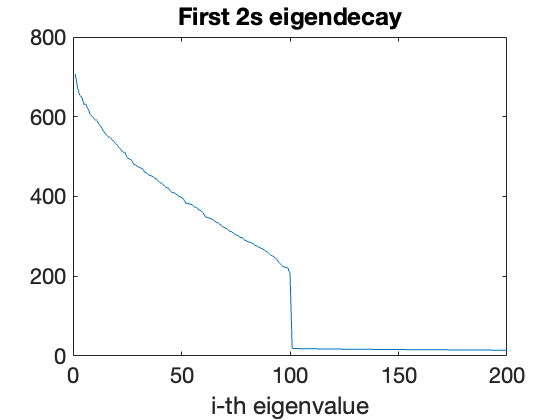}}
  \subfigure[\texttt{$s=50,\sigma_1=0.1,\sigma_2=0.05$}]{
  \includegraphics[width=0.26\textwidth]{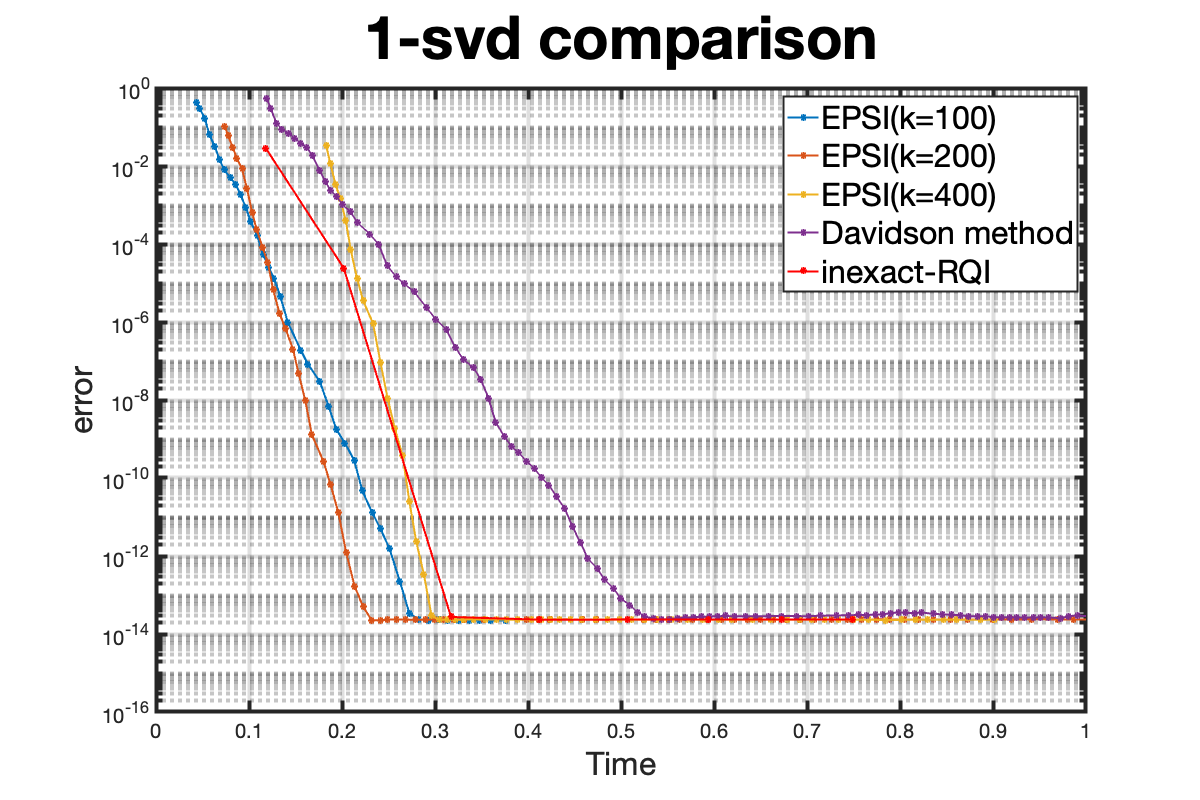}
  \includegraphics[width =0.18\textwidth]{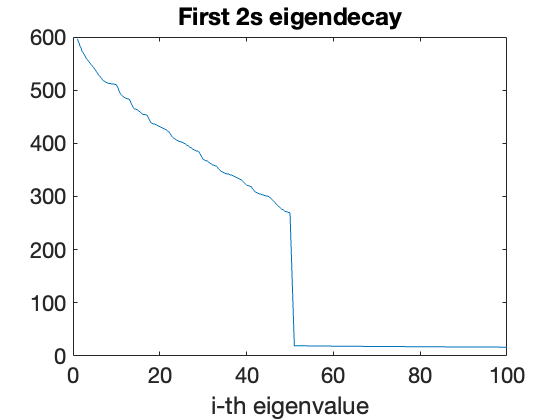}
  }\\
  \subfigure[\texttt{$s=100,\sigma_1=0.1,\sigma_2=0.1$}]{
  \includegraphics[width=0.26\textwidth]{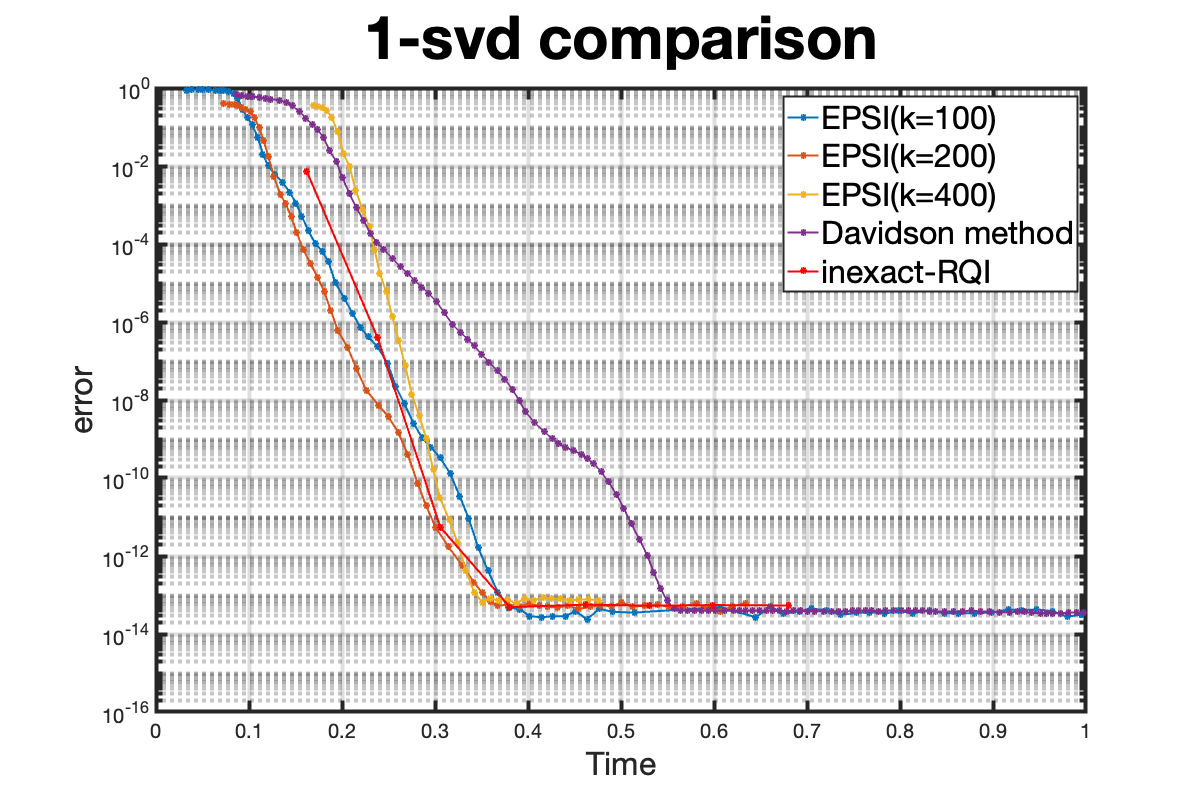}
  \includegraphics[width =0.18\textwidth]{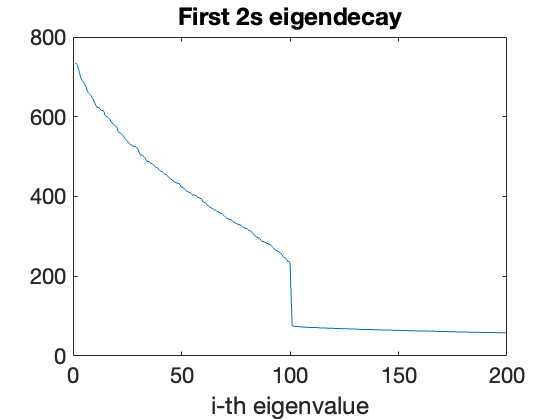}}
  \subfigure[\texttt{$s=200,\sigma_1=0.1,\sigma_2=0.05$}]{
  \includegraphics[width=0.26\textwidth]{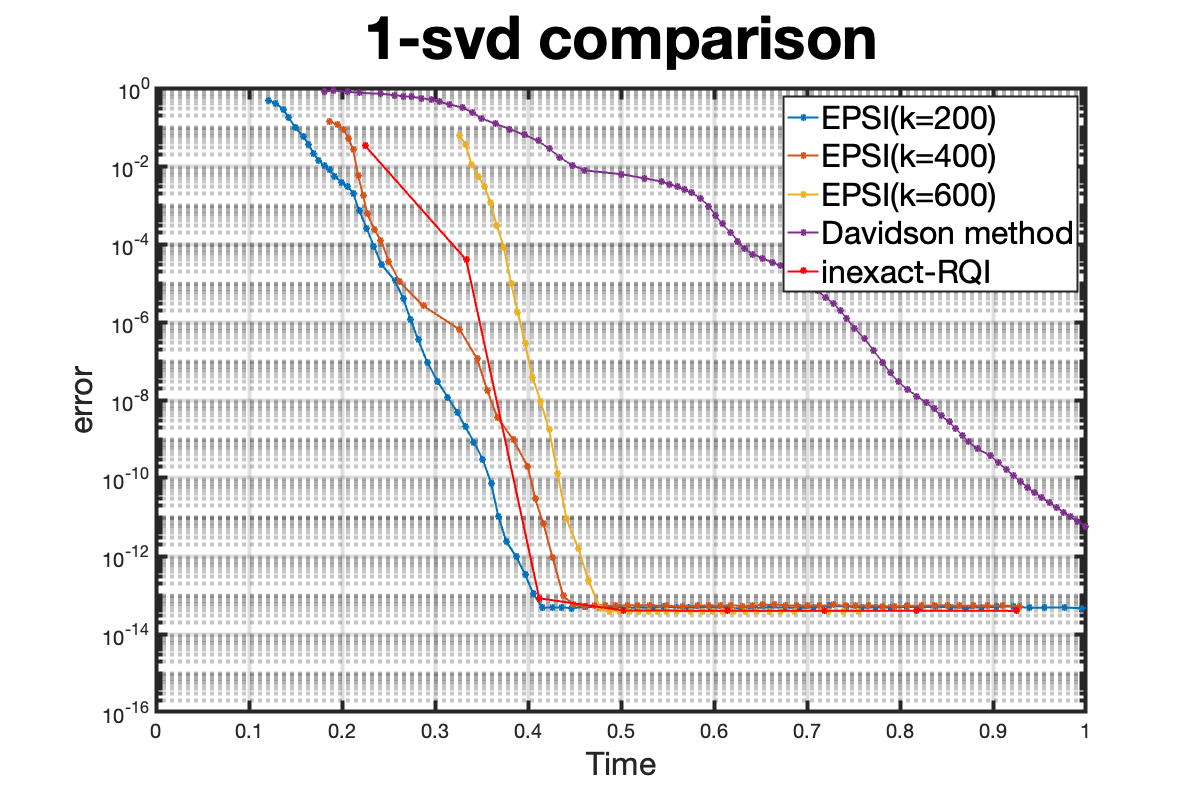}
  \includegraphics[width =0.18\textwidth]{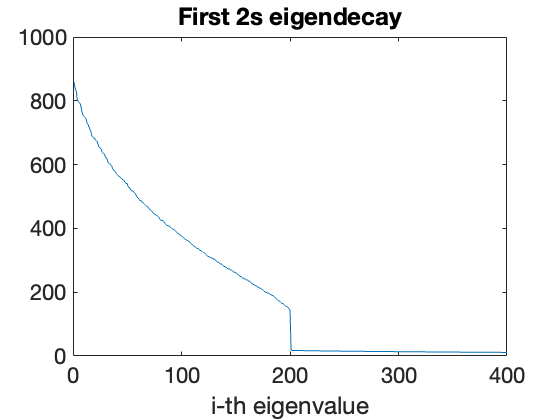}
  }
  \vspace{-0.1in}
  \caption{Comparison of EPSI with k-dim Nystrom approximation and Davidson's method on synthetic $2000\times 2000$ matrix with respect to the convergence of component on last $n-1$ eigenvectors $\|V_2^\top \hat u_i\|$.}\label{fig:low_rank_data}
  \vspace{-0.1in}
\end{figure*}

\paragraph{stability comparison between EPSI and inexact RQI} We have shown that EPSI shares a fast convergence as inexact RQI. However, inexact RQI is highly insensitive to initilization, which makes it less stable than EPSI. We compare the stability of EPSI with $k=200$ and inexact RQI by showing their performance statistics in 50 independent experiment on low rank matrix and dense matrix in figure \eqref{fig:stability}. 

\begin{figure*}[h]
\centering
\vspace{-0.1in}

  \subfigure[\texttt{low rank matrix}]{\includegraphics[width=0.4\textwidth]{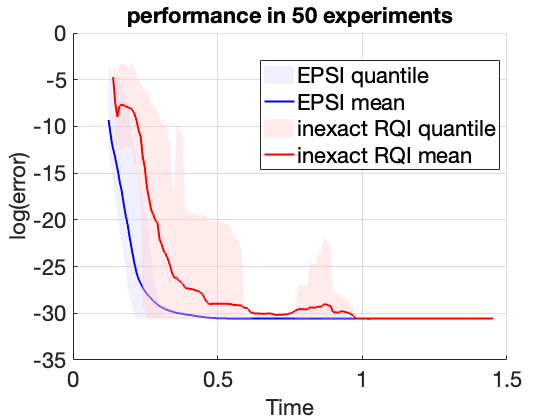}}
  \subfigure[\texttt{dense matrix}]{\includegraphics[width =0.4\textwidth]{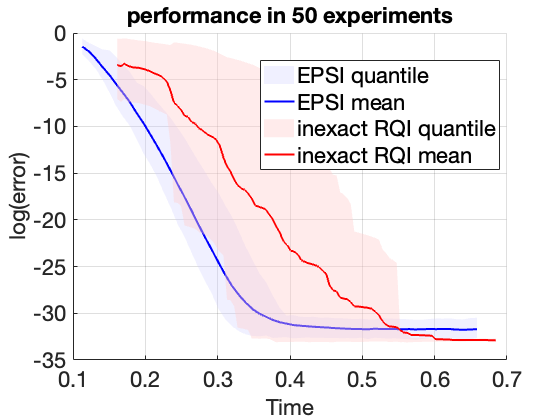}}
  \vspace{-0.1in}
  \caption{Comparison of EPSI and inexact RQI on low rank matrix (left) and dense matrix (right). The shadow area is the 0.1 quantile to 0.9 quantile of the convergence path in 50 independent experiments.  }\label{fig:stability}
  \vspace{-0.1in}
\end{figure*}




\paragraph{Sparse Matrix - Real World Data} We evaluate the practicality and preconditioning effect of \texttt{Lazy-EPSI} on real datasets \texttt{1138-bus} (a $1138\times 1138$ symmetric sparse matrix with 4054 non-zeros)\footnote{Our matrix data were obtained from https://sparse.tamu.edu/  } in figure \eqref{fig:realworld}, and in addition we reveal empirical evidence regarding the influence of different matrix structures on the convergence behavior of Davidson’s method. 

\begin{figure*}[h]
\centering
\vspace{-0.1in}
  \subfigure[\texttt{20 truncated svd}]{\includegraphics[width=0.32\textwidth]{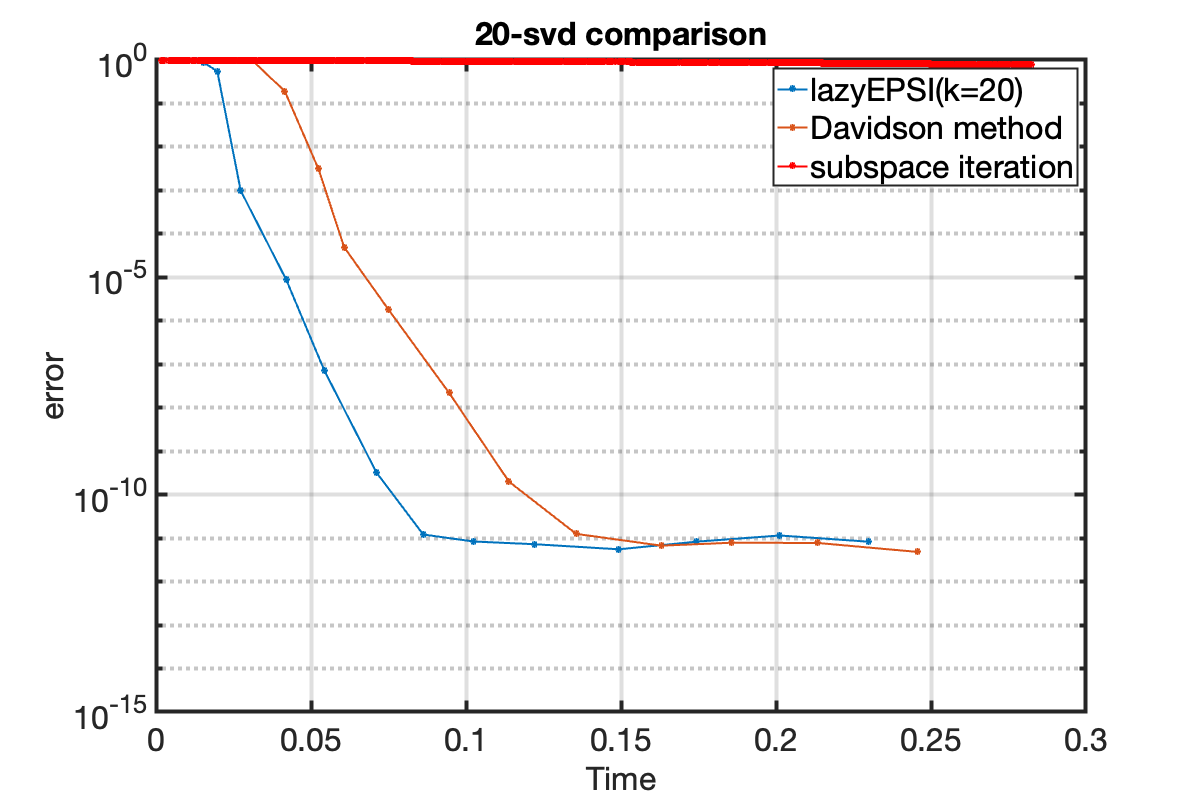}}
  \subfigure[\texttt{50 truncated svd}]{\includegraphics[width =0.32\textwidth]{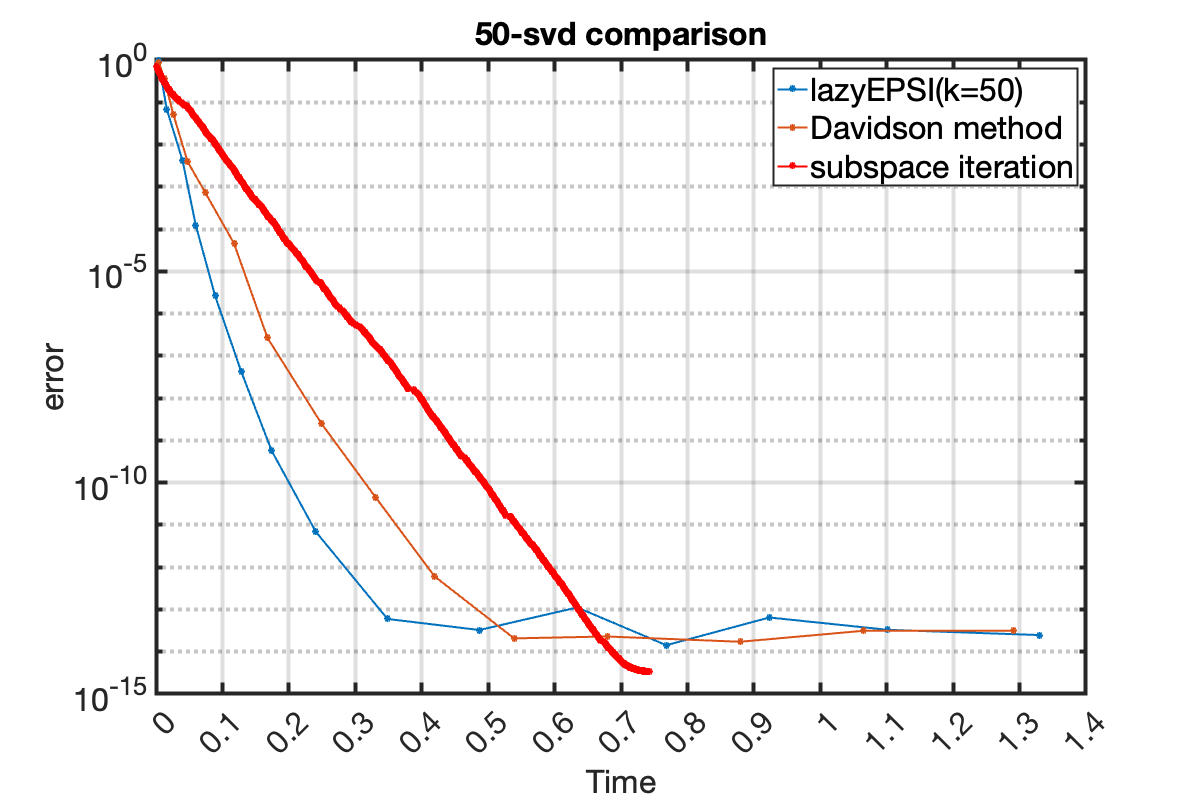}}
  \subfigure[\texttt{100 truncated svd}]{\includegraphics[width =0.32\textwidth]{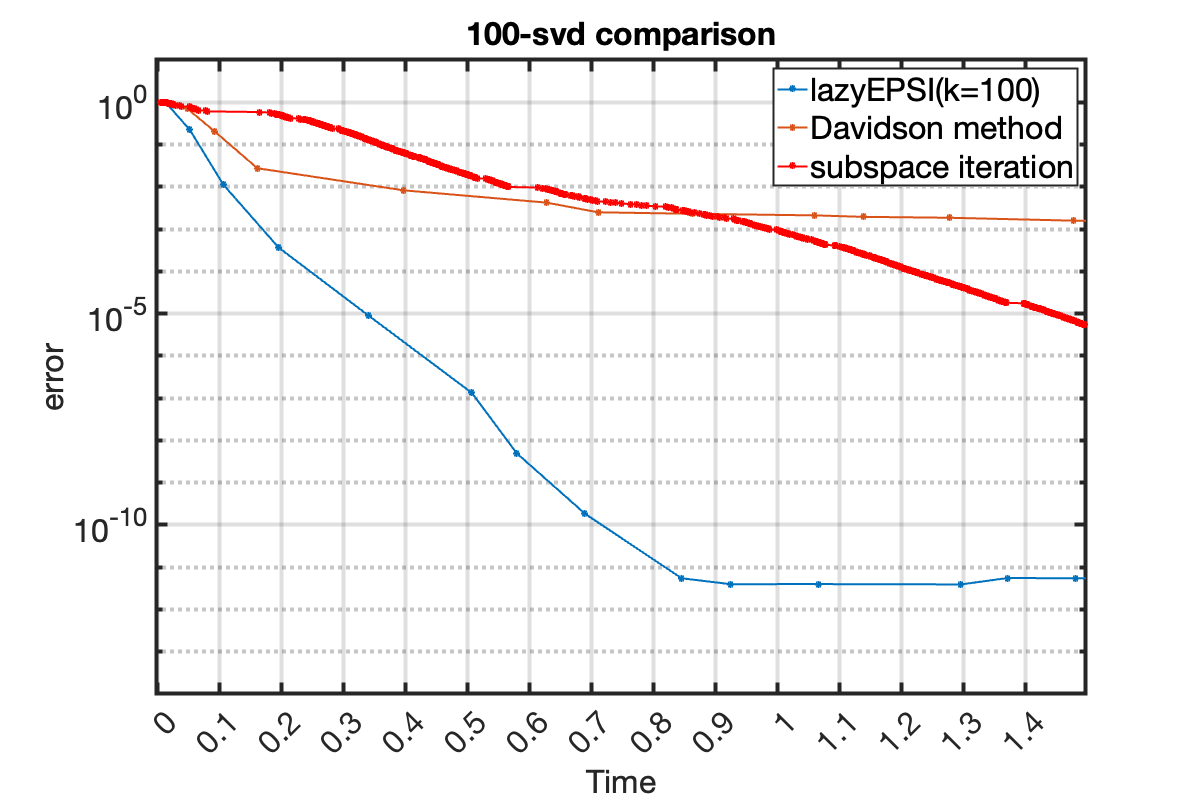}}\\
  \subfigure[\texttt{overview of data matrix}]{\includegraphics[width =0.32\textwidth]{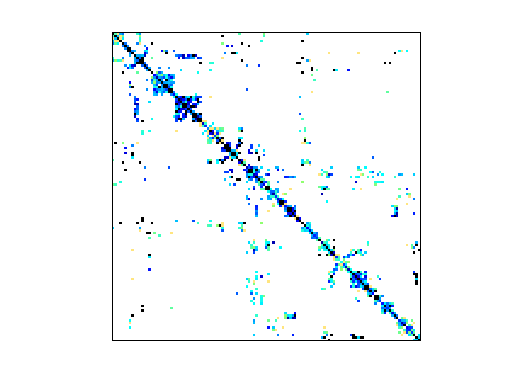}}
  \subfigure[\texttt{eigenvalues of data matrix}]{\includegraphics[width =0.32\textwidth]{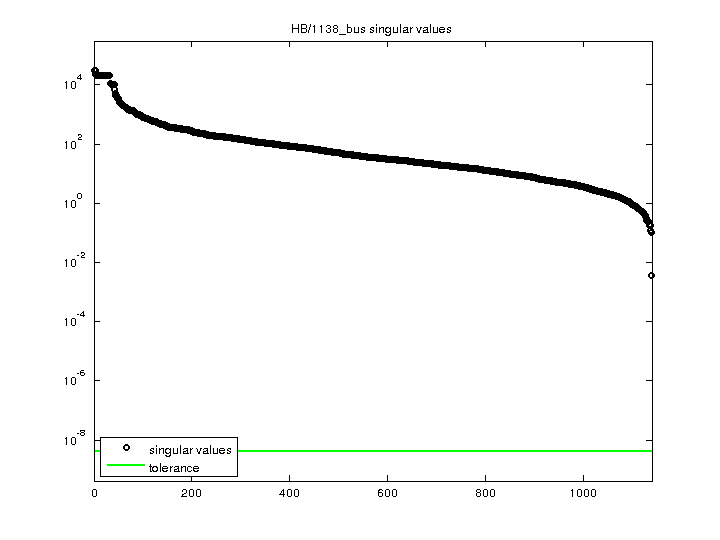}}
  \subfigure[\texttt{first 250 eigenvalues of data matrix}]{\includegraphics[width =0.32\textwidth]{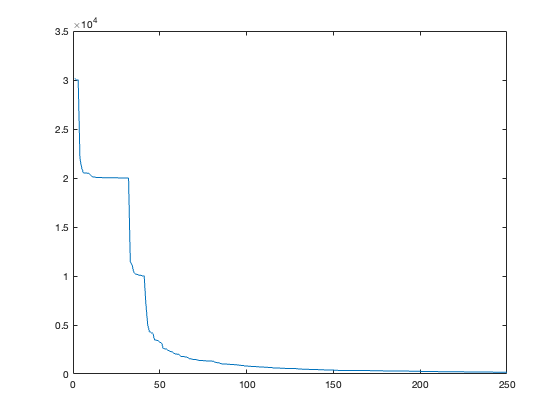}}
  \vspace{-0.1in}
  \caption{Comparison of \texttt{Lazy-EPSI}, Davidson's method and Subspace Iteration on 1138-bus dataset with respect to the error of space distance $\|V_2^\top U_k\|$, and the overview of the structure of data matrix. }\label{fig:realworld}
  \vspace{-0.1in}
\end{figure*}

Due to space constraints, additional experimental results are provided in the appendix.

\section{Conclusion}

In this work, we introduced \emph{Error-Powered Sketched Inverse Iteration} (\texttt{EPSI}), a novel approach to randomized eigenvalue computation that applies sketched inverse iteration to the sketching error rather than the estimated eigenvector. This ensures that the true eigenvector remains a fixed point of the iteration, making the quality of the randomized embedding influence convergence only as a preconditioner rather than determining it. Extending \texttt{EPSI} to compute the first $k$ singular vectors, we establish a convergence rate that depends solely on $\frac{\lambda_k}{\lambda_k-\lambda_{k+1}}$, eliminating dependence on intermediate spectral gaps through a novel \emph{orthogonalization step}. Unlike Newton Sketch, which requires solving a sketched subproblem at each iteration, our method efficiently leverages the \emph{Nyström approximation} and the \emph{Woodbury identity} for computational efficiency. Finally, we provide theoretical guarantees, demonstrating at least linear improvement in convergence with sketch size—marking, to the best of our knowledge, the first such result for eigenvalue computation. These contributions position \texttt{EPSI} as a flexible and theoretically robust framework for scalable spectral computations.

\section*{Acknowledgement}
The authors would like to thank Yifan Chen, Yijun Dong, Jiajin Li, Jorge Nocedal and Lexing Ying for valueable feedback on the work. 

\bibliographystyle{alpha}
\bibliography{references} 

\newpage
\appendix

\section{Local convergence of Lazy ESPI}

\begin{proof}
    
Provided that we have previous $i-1$ eigenvector approximation $U\in \mathbb{R}^{n\times i-1}$, and we move update the $i$-th eigenvector. For notation simplicity, we use $u_{q}, u_{q+1}$ instead of $u^i_q,u^i_{q+1}$.

Denote $\hat{T}=\lambda_{i} I-(I-UU^\top)\hat A(I-UU^\top)$. For $(\lambda_i I-A)u_{q+1}=(\lambda_i I-A)u_{q}-(\lambda_{i} I-A)(\lambda_{i} I-(I-UU^\top)\hat A(I-UU^\top))^{-1}(\lambda_{i} I-A)u_q$, we have
\begin{align}
\label{decompose2}
    (\lambda_{i} I-\Lambda^2)V_2^\top u_{q+1} &= (\lambda_{i} I-\Lambda_2)V_2^\top u_q-(\lambda_{i} I-\Lambda_2)V_2^\top \hat{T}^{-1}(\lambda_{i} I-A)u_q\notag\\
    &= (\lambda_{i} I-\Lambda_2)V_2^\top u_q-(\lambda_{i} I-\Lambda_2)V_2^\top \hat{T}^{-1}(V_1(\lambda_{i} I-\Lambda_1)V_1^\top +V_2(\lambda_{i} I-\Lambda_2)V_2^\top )u_q\notag\\
    &= \underbrace{(\lambda_{i} I-\Lambda_2)V_2^\top u_q-(\lambda_{i} I-\Lambda_2)V_2^\top \hat{T}^{-1}V_2(\lambda_{i} I-\Lambda_2)V_2^\top u_q}_{\texttt{term}\ 1}\notag\\
    &- \underbrace{(\lambda_{i} I-\Lambda_2)V_2^\top \hat{T}^{-1}V_1(\lambda_{i} I-\Lambda_1)V_1^\top u_q}_{\texttt{term}\ 2}.
\end{align}
We aim to bound \texttt{term} 1 and \texttt{term} 2 separately in following analysis.

Firstly, we aim to show that \texttt{term} 1 can be bounded as (\ref{eq:term1results}). With assumption $\|V_{i+1:n}^\top U\|\leq \epsilon$, by \cite[Lemma B.2]{allen2016lazysvd} (for completeness we restate as Lemma \ref{tildeT}), we have $\|(I-UU^\top)A(I-UU^\top)-(I-V_{1:i-1}V_{1:i-1}^\top)A(I-V_{1:i-1}V_{1:i-1}^\top)\|\leq 13\lambda_1\epsilon$. 

\begin{lemma}[\cite{allen2016lazysvd} Lemma B.2]
\label{tildeT}
    Let \( M \) be a PSD matrix with eigenvalues \( \lambda_1 \geq \cdots \geq \lambda_d \) and the corresponding eigenvectors \( u_1, \dots, u_d \in \mathbb{R}^d \). For every \( k \geq 1 \), define 
\[
U^\perp = (u_1, \dots, u_k) \in \mathbb{R}^{d \times k} \quad \text{and} \quad U = (u_{k+1}, \dots, u_d) \in \mathbb{R}^{d \times (d-k)}.
\]
For every \( \varepsilon \in (0, \frac{1}{2}) \), let \( V_s \in \mathbb{R}^{d \times s} \) be an orthogonal matrix such that \( \| V_s^\top U \|_2 \leq \varepsilon \). Define \( Q_s \in \mathbb{R}^{d \times s} \) to be an arbitrary orthogonal basis of the column span of \( U^\perp (U^\perp)^\top V_s \). Then we have:
\[
\left\| \left( I - Q_s Q_s^\top \right) M \left( I - Q_s Q_s^\top \right) - \left( I - V_s V_s^\top \right) M \left( I - V_s V_s^\top \right) \right\|_2 \leq 13 \lambda_1 \varepsilon.
\]
\end{lemma}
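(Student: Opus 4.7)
The plan is to reduce the claim to an operator-norm bound on the difference between the two orthogonal projections $P_V := V_s V_s^\top$ and $P_Q := Q_s Q_s^\top$; once we have $\|P_V - P_Q\|_2 = O(\varepsilon)$, a one-line telescoping identity combined with $\|M\|_2 = \lambda_1$ delivers the result.

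First I would split $V_s$ across the top/bottom invariant subspaces of $M$. Writing $V_s = U^\perp(U^\perp)^\top V_s + UU^\top V_s$, set $W := U^\perp(U^\perp)^\top V_s$ (which spans the same column space as $Q_s$, by construction) and $E := UU^\top V_s$, so that $\|E\|_2 = \|U^\top V_s\|_2 \leq \varepsilon$. Orthogonality of the ranges of $U^\perp$ and $U$ gives $W^\top E = 0$; combined with $V_s^\top V_s = I_s$ this yields
\[
W^\top W = I_s - E^\top E, \qquad \|I_s - W^\top W\|_2 \leq \varepsilon^2,
\]
so $W$ has full column rank for $\varepsilon < 1$ and $P_Q = W(W^\top W)^{-1}W^\top$.

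The step I expect to require the most care is the projector perturbation estimate. Using the decomposition
\[
P_V - P_Q \;=\; W\bigl[I_s - (W^\top W)^{-1}\bigr]W^\top + WE^\top + EW^\top + EE^\top,
\]
the first term has norm at most $\varepsilon^2/(1-\varepsilon^2)$, the cross terms together at most $2\|W\|_2\|E\|_2 \leq 2\varepsilon$, and the last at most $\varepsilon^2$; for $\varepsilon \in (0,\tfrac12)$ this sums to at most $4\varepsilon$. A sharper $\sin$-$\theta$ argument between the ranges of $V_s$ and $Q_s$ would give a smaller constant, but this crude bound already suffices.

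Finally I would apply the telescoping identity
\begin{align*}
(I - P_Q)M(I - P_Q) - (I - P_V)M(I - P_V) &= (I - P_Q)M(P_V - P_Q) \\
&\quad + (P_V - P_Q)M(I - P_V),
\end{align*}
take operator norms, and use submultiplicativity with $\|I - P_V\|_2 = \|I - P_Q\|_2 = 1$ and $\|M\|_2 = \lambda_1$ to bound the right-hand side by $2\lambda_1\|P_V - P_Q\|_2 \leq 8\lambda_1\varepsilon$, comfortably within the claimed $13\lambda_1\varepsilon$. The slightly larger constant in \cite{allen2016lazysvd} reflects a convenient packaging of the estimates; a structurally identical argument with tighter bookkeeping recovers the exact constant there.
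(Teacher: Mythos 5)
The paper does not give its own proof of this lemma: it is quoted verbatim from Allen-Zhu and Li's LazySVD paper (their Lemma B.2) and used as a black box in the proof of Lemma~\ref{lazyepsi}. So there is nothing in the paper to compare against line by line; what can be checked is whether your argument is internally sound, and it is.

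Your proof is a correct and fairly clean projector-perturbation argument. The decomposition $V_s = W + E$ with $W = U^\perp(U^\perp)^\top V_s$, $E = UU^\top V_s$, and $W^\top E = 0$ is exactly the right split; $W^\top W = I_s - E^\top E$ follows immediately and gives both invertibility of $W^\top W$ for $\varepsilon < 1$ and $\|W\|_2 \le 1$. The expansion
\[
P_V - P_Q = W\bigl[I_s - (W^\top W)^{-1}\bigr]W^\top + W E^\top + E W^\top + E E^\top
\]
is algebraically correct, each term is bounded as you state, and for $\varepsilon \in (0,\tfrac12)$ the total is comfortably below $4\varepsilon$. The telescoping identity $(I-P_Q)M(I-P_Q) - (I-P_V)M(I-P_V) = (I-P_Q)M(P_V - P_Q) + (P_V - P_Q)M(I-P_V)$ then gives $8\lambda_1\varepsilon \le 13\lambda_1\varepsilon$, so you even obtain a slightly sharper constant. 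One small remark worth making explicit: for $Q_s$ to live in $\mathbb{R}^{d\times s}$ one needs $W$ to have rank $s$, which your inequality $\|I_s - W^\top W\|\le \varepsilon^2 < 1$ guarantees, but which also implicitly requires $s \le k$ (otherwise $W = U^\perp(U^\perp)^\top V_s$ cannot have full column rank). This is assumed in the lemma statement via the dimensions of $Q_s$, and it holds in every place the paper invokes the lemma, but it is the one hypothesis your write-up uses silently.
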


This indicates
{\footnotesize
\begin{align*}
    (I-V_{1:i-1}V_{1:i-1}^\top)A(I-V_{1:i-1}V_{1:i-1}^\top)-13\lambda_1\epsilon I
    \preceq (I-UU^\top)A(I-UU^\top)
    \preceq (I-V_{1:i-1}V_{1:i-1}^\top)A(I-V_{1:i-1}V_{1:i-1}^\top)+13\lambda_1\epsilon I.
\end{align*}}

With assumption $\epsilon\leq \frac{\eta}{26(\lambda_1-\eta)}$, we can guarantee that $\hat{T}$ is positive definite (thus invertible)  via
\begin{align*}
    &\lambda_{i} I-(I-UU^\top)\hat A(I-UU^\top)
    \succeq \lambda_{i} I-(I-UU^\top)(A-\eta I)(I-UU^\top)\\
    \succeq& \lambda_{i} I-(I-V_{1:i-1}V_{1:i-1}^\top)(A-\eta I)(I-V_{1:i-1}V_{1:i-1}^\top)-13(\lambda_1-\eta)\epsilon I\\
    \succeq& \lambda_{i} I-V_{i:n}(\Lambda_{i:n}-\eta I) V_{i:n}^\top-13(\lambda_1-\eta)\epsilon I\succeq 0,
\end{align*}  
$\Lambda_{i:n}$ in last line denotes a diagonal matrix, whose contains $i$ to $n$ diagonal elements of $\Lambda$.

Similarly we have $\lambda_{i} I-(I-UU^\top)\hat A(I-UU^\top)\preceq \lambda_{i} I-V_{i:n}(\Lambda_{i:n}-3\eta I) V_{i:n}^\top+13(\lambda_1-\eta)\epsilon$. Thus 
\begin{align}
\label{order1}\scriptsize
    \notag&(\lambda_{i} I-(I-UU^\top)\hat A(I-UU^\top))^{-1}\succeq (\lambda_{i} I-V_{i:n}(\Lambda_{i:n}-3\eta I) V_{i:n}^\top+13(\lambda_1-\eta)\epsilon I)^{-1}\\
    &(\lambda_{i} I-(I-UU^\top)\hat A(I-UU^\top))^{-1}\preceq (\lambda_{i} I-V_{i:n}(\Lambda_{i:n}-\eta I) V_{i:n}^\top-13(\lambda_1-\eta)\epsilon I)^{-1},
\end{align}
and thus $\|\hat T^{-1}\|\leq \frac{1}{\eta-13(\lambda_1-\eta)\epsilon}\leq \frac{2}{\eta}$.

By \eqref{order1} we can bound term $(\lambda_{i} I-\Lambda_2)-(\lambda_{i} I-\Lambda_2)V_2^\top \hat{T}^{-1}V_2(\lambda_{i} I-\Lambda_2)$ by 
\begin{align*}
    &(\lambda_{i} I-\Lambda_2)-(\lambda_{i} I-\Lambda_2)V_2^\top (\lambda_{i} I-(I-UU^\top)\hat A(I-UU^\top))^{-1}V_2(\lambda_{i} I-\Lambda_2)\\
    \preceq & (\lambda_{i} I-\Lambda_2)-(\lambda_{i} I-\Lambda_2)V_2^\top(\lambda_{i} I-V_{i:n}(\Lambda_{i:n}-3\eta I) V_{i:n}^\top+13(\lambda_1-\eta)\epsilon I)^{-1}V_2(\lambda_{i} I-\Lambda_2)\\
    =&(\lambda_{i} I-\Lambda_2)-(\lambda_{i} I-\Lambda_2)((\lambda_{i}+3\eta+13(\lambda_1-\eta)\epsilon) I-\Lambda_{2})^{-1}(\lambda_{i} I-\Lambda_2)\\
    &(\lambda_{i} I-\Lambda_2)-(\lambda_{i} I-\Lambda_2)V_2^\top (\lambda_{i} I-(I-UU^\top)\hat A(I-UU^\top))^{-1}V_2(\lambda_{i} I-\Lambda_2)\\
    \succeq &(\lambda_{i} I-\Lambda_2)-(\lambda_{i} I-\Lambda_2)((\lambda_{i}+\eta-13(\lambda_1-\eta)\epsilon) I-\Lambda_{2})^{-1}(\lambda_{i} I-\Lambda_2).
\end{align*}
Note that $\|(\lambda_{i} I-\Lambda_2)-(\lambda_{i} I-\Lambda_2)((\lambda_{i}+3\eta+13(\lambda_1-\eta)\epsilon) I-\Lambda_{2})^{-1}(\lambda_{i} I-\Lambda_2)\|=\max_{\lambda_{j}\leq \lambda_{i}}(\lambda_{i}-\lambda_{j})-\frac{(\lambda_{i}-\lambda_{j})^2}{\lambda_{i}+3\eta+13(\lambda_i-\eta)\epsilon-\lambda_{j}}\leq 3\eta+13(\lambda_i-\eta)\epsilon\leq 3.5\eta$ for some small constant $c$, thus we have
\begin{align}
\label{eq:term1results}
    \|(\lambda_{i} I-\Lambda_2)-(\lambda_{i} I-\Lambda_2)V_2^\top (\lambda_{i} I-(I-UU^\top)\hat A(I-UU^\top))^{-1}V_2(\lambda_{i} I-\Lambda_2)\|\leq  3.5\eta.
\end{align}

 Secondly, we provide upper bound of \texttt{term} 2. We decompose $(\lambda_{i} I-\Lambda_2)V_2^\top \hat{T}^{-1}V_1$ by
\begin{align}
\label{decompose1}
    (\lambda_{i} I-\Lambda_2)V_2^\top \hat{T}^{-1}V_1  = & (\lambda_{i} I-\Lambda_2)V_2^\top \tilde{T}^{-1}V_1+(\lambda_{i} I-\Lambda_2)V_2^\top (\hat{T}^{-1}-\tilde{T}^{-1})V_1\notag\\
    &=\underbrace{(\lambda_{i} I-\Lambda_2)V_2^\top \tilde{T}^{-1}V_1}_{\text{using true singular vectors}}+\underbrace{(\lambda_{i} I-\Lambda_2)V_2^\top \tilde{T}^{-1}(\tilde{T}-\hat{T})\hat{T}^{-1}V_1}_{\text{error made by estimated singular vectors}},
\end{align}
where $\tilde{T}=\lambda_{i} I-(I-V_{1:i-1}V_{1:i-1}^\top)\hat A(I-V_{1:i-1}V_{1:i-1}^\top)$ which similar to the power iteration matrix $\hat T$ but using the true singular vectors $V_{1:i-1}$ rather than the estimated one $U$.  Denote $T = \lambda_{i} I-(I-V_{1:i-1}V_{1:i-1}^\top)(A-2\eta I)(I-V_{1:i-1}V_{1:i-1}^\top)$, then the first term of \eqref{decompose1} becomes
\begin{align*}
    (\lambda_{i} I-\Lambda_2)V_2^\top \tilde{T}^{-1}V_1&=(\lambda_{i} I-\Lambda_2)V_2^\top (\tilde{T}^{-1}-T^{-1})V_1= (\lambda_{i} I-\Lambda_2)V_2^\top \tilde{T}^{-1}(T-\tilde{T})T^{-1}V_1,
\end{align*}
for the eigenvector of $T^{-1}$ is exactly that of $A$, thus $V_2^\top T^{-1}V_1=0$. 

Recall that $(I-V_{1:i-1}V_{1:i-1}^\top)\hat A(I-V_{1:i-1}V_{1:i-1}^\top)$ is a perturbation of $ (I-V_{1:i-1}V_{1:i-1}^\top)(A-2\eta I)(I-V_{1:i-1}V_{1:i-1}^\top)$, so the $k-i+2$ to $n-i+1$ eigenvectors of $\tilde{T}$ is approximately close to $V_2$. We rearrange the sequence of eigenvalue of $\tilde T$ and $T$, such that $\tilde{T}$ has SVD $\tilde{T} = U_T\Lambda_TU_T^\top$, where 
\begin{align*}
\Lambda_T=\Lambda(\tilde T)\approx \Lambda(T)= diag(\lambda_i-\lambda_i+2\eta,\lambda_i-\lambda_{i-1}+2\eta,\cdots,\lambda_i-\lambda_n+2\eta,\underbrace{\lambda_i,\cdots,\lambda_i}_{i-1})  .
\end{align*}
This means the $k-i+2$-th to $n-i+1$-th eigenvector of $T$ is exactly $V_2$ and then $(U_T)_{k-i+2:n-i+1}\approx V_2$.  Consequently $(\lambda_{i} I-\Lambda_2)V_2^\top\tilde{T}^{-1} U_T = (\lambda_{i} I-\Lambda_2)V_2^\top U_T\Lambda_T^{-1}$ can be decomposed as 
\begin{align}
\label{decomT}
    (\lambda_{i} I-\Lambda_2)V_2^\top U_T\Lambda_T^{-1}=(\lambda_{i} I-\Lambda_2)V_2^\top (U_{T,1}U_{T,1}^\top+U_{T,2}U_{T,2}^\top)U_T\Lambda_T^{-1},
\end{align}
 where $U_{T,1}$ is $(U_T)_{k-i+2:n-i+1}$ and $U_{T,2}$ is the rest $k$ columns of $U_T$. 
 
 We first give the bound of $(\lambda_{i} I-\Lambda_2)V_2^\top U_{T,1}U_{T,1}^\top U_T\Lambda_T^{-1}$ in \eqref{decomT}. Note that $\tilde{T}\succeq \lambda_i I-(I-V_{1:i-1}V_{1:i-1}^\top)(A-\eta I)(I-V_{1:i-1}V_{1:i-1}^\top)$ yields $\Lambda_T^{-1}(j,j)\leq \frac{1}{\lambda_i-\lambda_{i+j-1}+\eta}$ for $j\leq n-i+1$. Thus 
\begin{align}
\label{decompT1}
    \|(\lambda_{i} I-\Lambda_2)V_2^\top U_{T,1}U_{T,1}^\top U_T\Lambda_T^{-1}\|_F^2&\leq \sum_{s=1}^{n-k}(\sum_{t=1}^{k-i+1} (\frac{\lambda_{i}-\lambda_{k+s}}{\lambda_{i}-\lambda_{i+t-1}+\eta} v_s^\top u_t)^2+\sum_{t=n-i+2}^{n} (\frac{\lambda_{i}-\lambda_{k+s}}{\lambda_{i}} v_s^\top u_t)^2),
\end{align}
where $v_s$ is s-th eigenvector of $V_2$ and $u_t$ is t-th eigenvector of $U_T$. 

To bound $v_s^\top u_t$, we are using the fact that $(I-V_{1:i-1}V_{1:i-1}^\top)\hat A(I-V_{1:i-1}V_{1:i-1}^\top)$ is a perturbation of $ (I-V_{1:i-1}V_{1:i-1}^\top)(A-2\eta I)(I-V_{1:i-1}V_{1:i-1}^\top)$, whose eigenvectors are exactly $v_s$. By Davis-kahan theorem, when $k+1>i+t-1$ and thus $\lambda_{k+s}< \lambda_{i+t-1}$, for any $l>0$ we have
\begin{align}
\label{DK}
    \|(V_2)_{s:s+l}^\top u_t\|& \leq \frac{\|V_{i:n}V_{i:n}^\top\hat AV_{i:n}V_{i:n}^\top-V_{i:n} (\Lambda_{i:n}-2\eta I)V_{i:n}^\top\|}{gap}\notag\\
    &\leq \frac{\eta }{\lambda_{i+t-1}-3\eta-(\lambda_{k+s}-2\eta)}= \frac{\eta }{\lambda_{i+t-1}-\lambda_{k+s}-\eta},
\end{align}
where the second inequality comes form the fact that 
$-\eta I\preceq V_{i:n}^\top\hat AV_{i:n}-(\Lambda_{i:n}-2\eta I)\preceq \eta I$, and $gap\geq \lambda_{i+t-1}-3\eta-(\lambda_{k+s}-2\eta)$, because $v_s$'s corresponding eigenvalue is $\lambda_{k+s}-2\eta$ and $u_t$'s corresponding eigenvalue satisfies $\lambda_{i+t-1}-3\eta \leq \lambda\leq \lambda_{i+t-1}-\eta$.

Inequality \eqref{DK} indicates $\sum_{s=s_0}^{n-k}(v_s^\top u_t)^2\leq (\frac{\eta }{\lambda_{i+t-1}-\lambda_{k+s_0}-\eta})^2$. Then the problem 
$$\max_{v_s}\sum_{s=1}^{n-k} (\frac{\lambda_{i}-\lambda_{k+s}}{\lambda_{i}-\lambda_{i+t-1}+\eta} v_s^\top u_t)^2,\quad \text{subject to}\ \sum_{s=s_0}^{n-k}(v_s^\top u_t)^2\leq (\frac{\eta }{\lambda_{i+t-1}-\lambda_{k+s_0}-\eta})^2, \text{$\forall 1\leq s_0\leq n-k$}$$ 
can be viewed as a linear optimization $\max_{x_s}\sum_sa_sx_s$ with restriction $\sum_{s=s_0}^{n-k} x_s\leq b_{s_0}$, where $x_s = (v_s^\top u_t)^2$, $a_s=(\frac{\lambda_{i}-\lambda_{k+s}}{\lambda_{i}-\lambda_{i+t-1}+\eta})^2$ and $b_{s_0} = (\frac{\eta }{\lambda_{i+t-1}-\lambda_{k+s_0}-\eta})^2$. Since $a_s$ is ascending and $b_{s_0}$ is descending, the solution of the linear optimization problem is $x_j=b_j-b_{j+1}$ for $j\neq n$ and $x_n=b_n$. Thus we have
\begin{align*}
    &\sum_{t=1}^{k-i}\sum_s(\frac{\lambda_{i}-\lambda_{k+s}}{\lambda_{i}-\lambda_{i+t-1}+\eta} v_s^\top u_t)^2\\
    \leq& \sum_{t=1}^{k-i}(\frac{\eta}{\lambda_{i}-\lambda_{i+t-1}+\eta})^2(\sum_{s=1}^{n-k-1}(\lambda_{i}-\lambda_{k+s})^2(\frac{1}{(\lambda_{i+t-1}-\lambda_{k+s}-\eta)^2}-\frac{1}{(\lambda_{i+t-1}-\lambda_{k+s+1}-\eta)^2})).\\
\end{align*}

Note that the above summation $\sum_{s=1}^{n-k-1}(\lambda_{i}-\lambda_{k+s})^2(\frac{1}{(\lambda_{i+t-1}-\lambda_{k+s}-\eta)^2}-\frac{1}{(\lambda_{i+t-1}-\lambda_{k+s+1}-\eta)^2})$ can be viewed as the discretization of integral $\int_{\lambda_{i+t-1}-\lambda_{n}-\eta}^{\lambda_{i+t-1}-\lambda_{k+1}-\eta}(x+\lambda_{i}-\lambda_{t+i-1}+\eta)^2d\frac{1}{x^2}$, and the integral is actually larger. Thus a further analysis gives
\begin{align}
\label{longterm}
\scriptsize
    \notag&\sum_{t=1}^{k-i}(\frac{\eta}{\lambda_{i}-\lambda_{i+t-1}+\eta})^2(\sum_{s=1}^{n-k-1}(\lambda_{i}-\lambda_{k+s})^2(\frac{1}{(\lambda_{i+t-1}-\lambda_{k+s}-\eta)^2}-\frac{1}{(\lambda_{i+t-1}-\lambda_{k+s+1}-\eta)^2}))\\\notag
    \leq& \sum_{t=1}^{k-i}(\frac{\eta}{\lambda_{i}-\lambda_{i+t-1}+\eta})^2\int_{\lambda_{i+t-1}-\lambda_{n}-\eta}^{\lambda_{i+t-1}-\lambda_{k+1}-\eta}(x+\lambda_{i}-\lambda_{t+i-1}+\eta)^2d\frac{1}{x^2}\\\notag
    \leq& \sum_{t=1}^{k-i}(\frac{\eta}{\lambda_{i}-\lambda_{i+t-1}+\eta})^2(2\log(\frac{\lambda_{i+t-1}-\lambda_n-\eta}{\lambda_{i+t-1}-\lambda_{k+1}-\eta}))\\  \notag  
    + &\sum_{t=1}^{k-i}(\frac{\eta}{\lambda_{i}-\lambda_{i+t-1}+\eta})^2\biggl(4(\frac{\lambda_{i}-\lambda_{i+t-1}+\eta}{\lambda_{i+t-1}-\lambda_{k+1}-\eta}-\frac{\lambda_{i}-\lambda_{i+t-1}+\eta}{\lambda_{i+t-1}-\lambda_{n}-\eta})\\\notag
    +&((\frac{\lambda_{i}-\lambda_{i+t-1}+\eta}{\lambda_{i+t-1}-\lambda_{k+1}-\eta})^2-(\frac{\lambda_{i}-\lambda_{i+t-1}+\eta}{\lambda_{i+t-1}-\lambda_{n}-\eta})^2)\biggr)\\
    \leq& (k-i+1)(2\log(\frac{\lambda_{i}-\lambda_n-\eta}{\lambda_{i}-\lambda_{k+1}-\eta})+4(\frac{\eta}{\lambda_{k}-\lambda_{k+1}-\eta}-\frac{\eta}{\lambda_{k}})+(\frac{\eta}{\lambda_{k}-\lambda_{k+1}-\eta})^2-(\frac{\eta}{\lambda_{k}})^2)\\\notag
    \leq& k(2\log(\frac{\lambda_{i}-\lambda_n-\eta}{\lambda_{i}-\lambda_{k+1}-\eta})+c_{gap,\eta}),
\end{align}
Where $c_{gap,\eta}=c\frac{\eta}{\lambda_k-\lambda_{k+1}}$ for some small constant $c$, which bound the last two term in \eqref{longterm} since $constant \cdot \eta<\lambda_k-\lambda_{k+1}$ for some constant. The third inequality comes from the fact that
\begin{align*}
    \max_{\lambda_{i}\geq \lambda_{i+t-1}\geq\lambda_{k}\geq(1+\eta)\lambda_k}(\frac{\eta}{\lambda_{i}-\lambda_{i+t-1}+\eta})^2\log(\frac{\lambda_{i+t-1}-\lambda_n-\eta}{\lambda_{i+t-1}-\lambda_{k+1}-\eta})\leq \log(\frac{\lambda_{i}-\lambda_n-\eta}{\lambda_{i}-\lambda_{k+1}-\eta}).
\end{align*}

Back to \eqref{decompT1}, with $\sum_{t=n-i+1}^{n-1}\sum_s(\frac{\lambda_{i}-\lambda_{k+s}}{\lambda_{i}} v_s^\top u_t)^2\leq (i-1)\frac{\lambda_{i}-\lambda_{k+s}}{\lambda_{i}}\leq i-1$ we have $\|(\lambda_{i} I-\Lambda_2)V_2^\top U_{T,1}U_{T,1}^\top U_T\Lambda_T\|\leq \sqrt{2k(\log(\frac{(1-\eta)\lambda_{i}}{(1-\eta)\lambda_{i}-\lambda_{k+1}})+c_{gap,\eta})}$ with a little abuse of notation $c_{gap,\eta}$, which differs from previous ones with only $\frac{i-1}{2k}$.

Back to \eqref{decomT} we now have
\begin{align}
\label{pre1}
    \|(\lambda_{i} I-\Lambda_2)V_2^\top U_T\Lambda_T\|&\leq \|(\lambda_{i} I-\Lambda_2)V_2^\top U_{T,1}U_{T,1}^\top U_T\Lambda_T\|+\|(\lambda_{i} I-\Lambda_2)V_2^\top U_{T,2}U_{T,2}^\top U_T\Lambda_T\|\notag\\
    &\leq \sqrt{2k(\log(\frac{\lambda_{i}-\lambda_n-\eta}{\lambda_{i}-\lambda_{k+1}-\eta})+c_{gap,\eta})}+\frac{\lambda_{i}-\lambda_n}{\lambda_{i}-\lambda_{k+1}+\eta}\notag\\
    &\leq c_1\frac{\lambda_{i}-\lambda_n}{\lambda_{i}-\lambda_{k+1}},
\end{align}
where $c_1$ is a small constant that depends on $\sqrt{k}$.

Finally combining \eqref{decompose1} with \eqref{pre1}, and recall that we have proved $\|\hat T^{-1}\|\leq \frac{2}{\eta}$ and $\|\hat T-\tilde T\|\leq 13\lambda_1\epsilon$, we get 
\begin{align*}
    \|(\lambda_{i} I-\Lambda_2)V_2^\top \hat{T}^{-1}V_1\|&\leq  \|(\lambda_{i} I-\Lambda_2)V_2^\top \tilde{T}^{-1}V_1\|
    +\|(\lambda_{i} I-\Lambda_2)V_2^\top (\hat{T}^{-1}-\tilde{T}^{-1})V_1\|\\
    & =\|(\lambda_{i} I-\Lambda_2)V_2^\top \tilde{T}^{-1}V_1\|
    +\|(\lambda_{i} I-\Lambda_2)V_2^\top \tilde{T}^{-1}(\tilde{T}-\hat{T})\hat{T}^{-1}V_1\|\\
    & \leq  \frac{c_1 (\lambda_i-\lambda_n)}{\lambda_{i}-\lambda_{k+1}}+\|(\lambda_{i} I-\Lambda_2)V_2^\top \tilde{T}^{-1}\|\|(\tilde{T}-\hat{T})\hat{T}^{-1}V_1\|\\
    & \leq  \frac{c_1 (\lambda_i-\lambda_n)}{\lambda_{i}-\lambda_{k+1}}+\frac{c_1 (\lambda_i-\lambda_n)}{\lambda_{i}-\lambda_{k+1}}\frac{26\lambda_1\epsilon }{\eta}.
\end{align*}

With assumption $\epsilon\leq \frac{\eta}{26(\lambda_1-\eta)}$ and bound of term 1 in \eqref{eq:term1results}, combining two terms in \eqref{decompose2} gives
\begin{align*}
    \|V_2^\top (\lambda_i I-\Lambda^2)u_{q+1}\|&\leq 3.5\eta\|V_2^\top u_q\|+\frac{c_1(\lambda_i-\lambda_n)}{\lambda_i-\lambda_{k+1}}\|(\lambda_i I-\Lambda_1)V_1^\top u_q\|\\
    \|V_2^\top u_{q+1}\|&\leq \frac{3.5\eta}{\lambda_i-\lambda_{k+1}}\|V_2^\top u_q\|+\frac{c_1(\lambda_i-\lambda_n)}{(\lambda_i-\lambda_{k+1})^2}\|(\lambda_i I-\Lambda_1)V_1^\top u_q\|.
\end{align*}

Similar to proof in 1-svd, we bound $\|u_{q+1}\|$ by
\begin{align*}
    \|u_{q+1}\| &= \|u_q+\hat T^{-1}(A-\lambda_i I)u_q\|\\
    &\geq \|u_q\|-\|\hat T^{-1} \|\| (\Lambda_1-\lambda_iI)V_1^\top u_q+(\Lambda_2-\lambda_i I)V_2^\top u_q\|
\end{align*}
Denote $U_q = [u_q^1,u_q^2,\cdots,u_q^k]$, then by lemma \ref{correction} we have $\|(\Lambda_1-\lambda_iI)V_1^\top u_q\|\leq 17\lambda_1\|V_2^\top U_q\|^2$. And $\|(\Lambda_2-\lambda_i I)V_2^\top u_q\|\leq (\lambda_i-\lambda_{k+1})\|V_2^\top u_q\|$. Thus $\|u_{q+1}\|\geq 1-\frac{2(\lambda_i-\lambda_{k+1})}{\eta}\|V_2^\top u_q\|-\frac{34\lambda_1}{\eta}\|V_2^\top U_q\|^2$ for some constant $c$ suppose that $u_q$ is normalized such that $\|u_q\|=1$. 

Thus if $\epsilon_0 = max\{\frac{4(\lambda_i-\lambda_{k+1})}{\eta}\|V_2^\top u_q\|,\frac{68\lambda_1}{\eta}\|V_2^\top U_q\|^2\}$, we have
\begin{align*}
    \frac{\|V_2^\top u_{q+1}\|}{\|u_{q+1}\|}\leq \frac{1}{1-\epsilon_0}( \frac{3.5\eta}{\lambda_i-\lambda_{k+1}}\frac{\|V_2^\top u_q\|}{\|u_q\|}+\frac{c_1(\lambda_i-\lambda_n)}{(\lambda_i-\lambda_{k+1})^2}\frac{\|(\lambda_i I-\Lambda_1)V_1^\top u_q\|}{\|u_q\|}).
\end{align*}
\end{proof}

 \section{Proof of Lemmas \ref{correction}}

 \begin{proof}
    
Let $B=UU^\top AUU^\top$, then $Bu^i=\hat{\lambda}_iu^i$. We have
    \begin{align*}
        (\hat\lambda_i-\Lambda_1)V_1^\top u^i&=V_1^\top B u^i-\Lambda_1V_1^\top u^i\\
        &=(V_1^\top BV_1-\Lambda_1)V_1^\top u^i+V_1^\top BV_2V_2^\top u^i\\
        &=(V_1^\top BV_1-\Lambda_1)e_i+(V_1^\top BV_1-\Lambda_1)(V_1^\top u^i-e_i)+V_1^\top BV_2V_2^\top u^i\\
        (\lambda_i-\Lambda_1)V_1^\top u^i &= (V_1^\top BV_1-\Lambda_1)e_i+(V_1^\top BV_1-\Lambda_1)(V_1^\top u^i-e_i)+V_1^\top BV_2V_2^\top u^i-(\hat\lambda_i-\lambda_i)V_1^\top u^i.\\
    \end{align*}
    Thus we have 
    {\footnotesize
    \begin{align}
    \label{Verror}
        \|(\lambda_i-\Lambda_1)V_1^\top u^i\|\leq \|(V_1^\top BV_1-\Lambda_1)e_i\|+\|(V_1^\top BV_1-\Lambda_1)(V_1^\top u^i-e_i)\|+\|V_1^\top BV_2V_2^\top u^i\|+\|(\hat\lambda_i-\lambda_i)V_1^\top u^i\|.
    \end{align}}

    We aim to show that $\|V_1^\top BV_1-\Lambda_1\|$ is of quadratic order on $\|V_2^\top U\|$. First we decompose it into two parts
   \begin{align}
   \label{VBV}
       \notag\|V_1^\top B V_1-\Lambda_1\|&=\|V_1^\top UU^\top V_1^\top \Lambda_1V_1UU^\top V_1-\Lambda_1+V_1^\top UU^\top V_2^\top \Lambda_2V_2UU^\top V_1\|\\
       &\leq \|V_1^\top UU^\top V_1^\top \Lambda_1V_1UU^\top V_1-\Lambda_1\|+\|V_1^\top UU^\top V_2^\top \Lambda_2V_2UU^\top V_1\|.
   \end{align}

   Denote $E:=V_1^\top UU^\top V_1-I$. Note that $\sigma_{min}(V_1^\top UU^\top V_1)=\sigma_{min}(V_1^\top U)^2\geq 1-\|V_2^\top U\|^2$ since $\|V_1^\top Ux\|^2+\|V_2^\top Ux\|^2=\|x\|^2$. Thus $-\|V_2^\top U\|^2 I\preceq E\preceq 0 $. Then for the first term in \eqref{VBV} we have
   \begin{align*}
       \|V_1^\top UU^\top V_1^\top \Lambda_1V_1UU^\top V_1-\Lambda_1\|&=\|(I+E)\Lambda_1(I+E)-\Lambda_1\|\\
       &=\|E\Lambda_1+\Lambda_1E+E\Lambda_1E\|\\
       &\leq 3\lambda_1\|E\|\leq3\lambda_1\|V_2^\top U\|^2.
   \end{align*}

   For the second term in \eqref{VBV} we have
   \begin{align*}
       \|V_1^\top UU^\top V_2^\top \Lambda_2V_2UU^\top V_1\|\leq\lambda_{k+1}\|V_2^\top U\|^2.
   \end{align*}
    Thus $\|V_1^\top B V_1-\Lambda_1\|\leq4\lambda_1\|V_2^\top U\|^2$.

    For the last two terms in \eqref{Verror}, note that $\|V_1^\top BV_2V_2^\top u^i\|\leq \lambda_1\|U^\top V_2\|\|V_2u^i\|\leq\|V_2^\top U\|^2$ and $|\lambda_i-\hat \lambda_i|\leq \|V_1^\top BV_1-\Lambda_1\|$ by Weyl's theorem, thus we finally achieve
    \begin{align*}
        \|(\lambda_i-\Lambda_1)V_1^\top u^i\|&\leq\|V_1^\top B V_1-\Lambda_1\|+\|V_1^\top B V_1-\Lambda_1\|\|V_1^\top u^i-e_1\|+\|V_2^\top U\|^2+\|V_1^\top BV_1\|\|V_1^\top u^i\|\\
        &\leq 17\lambda_1\|V_2^\top U\|^2.
    \end{align*}

\end{proof}

\end{document}